\newtheorem{theorem}{Theorem}[section]
\newtheorem{lemma}[theorem]{Lemma}
\newtheorem{proposition}[theorem]{Proposition}
\newtheorem*{thm*}{\protect\theoremname}
\theoremstyle{definition}
\newtheorem{definition}[theorem]{Definition}
\newtheorem{remark}[theorem]{Remark}
\newtheorem{question}[theorem]{Question}
\newtheorem*{cor*}{\protect\corollaryname}
\renewcommand{\leq}{\leqslant}
\renewcommand{\geq}{\geqslant}
\long\def\@savemarbox#1#2{\global\setbox#1\vtop{\hsize\marginparwidth 
  \@parboxrestore\tiny\raggedright #2}}
\newcommand{\PSL}{\mathsf{PSL}}
\newcommand{\PGL}{\mathsf{PGL}}
\newcommand{\GL}{\mathsf{GL}}
\newcommand{\Bc}{\mathcal B}
\newcommand{\Pc}{\mathcal P}
\newcommand{\Tc}{\mathcal T}
\DeclareMathOperator{\Hb}{\mathbb{H}}
\DeclareMathOperator{\Rb}{\mathbb{R}}
\DeclareMathOperator{\Fc}{\mathcal{F}}
\DeclareMathOperator{\SL}{\mathsf{SL}}
\DeclareMathOperator{\Hom}{Hom}
\DeclareMathOperator{\Gr}{\mathrm{Gr}}
\providecommand{\corollaryname}{Corollary}
\providecommand{\theoremname}{Theorem}
\title{Cusped Borel Anosov representations with positivity}
\author[G.-S. Lee]{Gye-Seon Lee}
\address{Department of Mathematical Sciences and Research institute of Mathematics, Seoul National University, Seoul 08826, South Korea}
\email{gyeseonlee@snu.ac.kr}
\author[T. Zhang]{Tengren Zhang}
\address{Mathematics Department, National University of Singapore}
\email{matzt@nus.edu.sg}
\begin{document}

\begin{abstract}
We show that if a cusped Borel Anosov representation from a lattice $\Gamma\subset\PGL_2(\Rb)$ to $\PGL_d(\Rb)$ contains a unipotent element with a single Jordan block in its image, then it is necessarily a (cusped) Hitchin representation. We also show that the amalgamation of a Hitchin representation with a cusped Borel Anosov representation that is not Hitchin is never cusped Borel Anosov.
\end{abstract}
\thanks{G.-S. Lee was supported by the National Research Foundation of Korea(NRF) grant funded by the Korea government(MSIT) (No. 2020R1C1C1A01013667).
T. Zhang was supported by the NUS-MOE grant A-8000458-00-00.}

\subjclass[2020]{22E40, 20H10, 57M60}
\keywords{Anosov representations, Hitchin representations, Positivity, Fuchsian groups}

\maketitle
\tableofcontents

\section{Introduction}

Let $\Gamma$ be a (word) hyperbolic group and let $\theta\subset \Delta := \{1,\dots,d-1\}$ be any subset. In his seminal work, Labourie \cite{Lab06} defined what it means for a representation $\rho:\Gamma\to \PGL_d(\Rb)$ to be $P_\theta$-Anosov. This notion of Anosov representations has proven to be very useful: It is strong enough for general theorems to be proven for the entire class of Anosov representations, but at the same time is also flexible enough to admit many interesting examples. For this reason, the theory of Anosov representations has been heavily studied and developed in the last twenty years \cite{GW12,KLP17,GGKW17,BPS19}.

Recently, there has been a successful push to generalize the notion of Anosov representations to the setting where $\Gamma$ is a relatively hyperbolic group. These include the relatively asymptotically embedded and relatively Morse representations defined by Kapovich and Leeb \cite{KL18}, the relatively dominated representations defined by Zhu \cite{Zhu19}, and the extended geometrically finite representations defined by Weisman \cite{Weis22}. Most recently, Zhu and Zimmer \cite{ZZ22a} defined the notion of relatively Anosov representations, and clarified the relationship between their notion and the other notions mentioned above.

In the case when $\Gamma\subset\PGL_2(\Rb)$ is a geometrically finite subgroup i.e. a finitely generated, non-elementary, discrete subgroup, Canary, Zhang and Zimmer \cite{CZZ21} defined the notion of a cusped Anosov representation $\rho:\Gamma\to\PGL_d(\Rb)$. If we view $\Gamma$ as a hyperbolic group relative to the cusp subgroups in $\Gamma$, then cusped Anosov representations are a special case of all the above notions. Canary, Zhang and Zimmer \cite{CZZ22} also defined the notion of transverse representations, which extends the notion of cusped Anosov representations to allow for $\Gamma$ to be any non-elementary, discrete subgroup of $\PGL_2(\Rb)$ (and more generally, any projectively visible group), see Remark \ref{rem: extend}. In this article, we will focus exclusively on transverse representations of non-elementary, discrete subgroups of $\PGL_2(\Rb)$, which we now define.

For any non-elementary, discrete subgroup $\Gamma\subset\PGL_2(\Rb)$, let $\Lambda(\Gamma)$ denote its limit set, i.e. $\Lambda(\Gamma)$ is the set of accumulation points in $\partial\Hb^2$ of some/any $\Gamma$-orbit in $\Hb^2$. Note that $\Lambda(\Gamma)$ is an infinite, $\Gamma$-invariant, compact subset of $\partial\Hb^2$. For any subset $\theta\subset\Delta$, let $\Fc_\theta(\Rb^d)$ denote the corresponding partial flag manifold, i.e. if $\theta=\{k_1,\dots, k_s\}$ with $k_1<\dots<k_s$, then
\[\Fc_\theta(\Rb^d):=\{F=(F^{k_1},\dots, F^{k_s}) \mid F^{k_i}\in\Gr_{k_i}(\Rb^d)\text{ and }F^{k_i}\subset F^{k_{i+1}}\text{ for all }i\}.\]
In the case when $\theta=\Delta$, we will simply denote $\Fc(\Rb^d):=\Fc_{\Delta}(\Rb^d)$.
 
\begin{definition}
Let $\theta\subset\Delta$ be symmetric, i.e. $k\in\theta$ if and only if $d-k\in\theta$, and let $\Gamma\subset\PGL_2(\Rb)$ be a non-elementary, discrete subgroup. A representation $\rho:\Gamma\to\PGL_d(\Rb)$ is \emph{$P_\theta$-transverse} if there is a continuous map $\xi = (\xi^k)_{k \in \theta} :\Lambda(\Gamma)\to\Fc_\theta(\Rb^d)$ that satisfies all of the following properties:
\begin{itemize}
\item $\xi$ is \emph{$\rho$-equivariant}, i.e. $\xi(\gamma \cdot x)=\rho(\gamma)\cdot \xi(x)$ for all $\gamma\in\Gamma$ and $x\in\Lambda(\Gamma)$.
\item $\xi$ is \emph{transverse}, i.e. $\xi^k(x)+\xi^{d-k}(y)=\Rb^d$ for all distinct points $x,y\in\Lambda(\Gamma)$ and all $k\in\theta$.
\item $\xi$ is \emph{strongly dynamics preserving}, i.e. if $\{\gamma_n\}$ is a sequence in $\Gamma$ such that $\gamma_n\cdot b_0\to x$ and $\gamma_n^{-1}\cdot b_0\to y$ for some/any $b_0\in\Hb^2$ and some $x,y\in\Lambda(\Gamma)$, then $\rho(\gamma_n)\cdot F\to\xi(x)$ for all $F\in\Fc_\theta(\Rb^d)$ that is transverse to $\xi(y)$. 
\end{itemize}
\end{definition}
In the above definition, the strongly dynamics preserving property of $\xi$ ensures that it is unique to $\rho$. We thus refer to $\xi$ as the \emph{limit map} of $\rho$. 

\begin{remark}\label{rem: extend}
Canary, Zhang and Zimmer \cite[Theorems 4.1 and 6.1]{CZZ21} proved that if $\Gamma\subset\PGL_2(\Rb)$ is geometrically finite, then for any symmetric $\theta\subset\Delta$, a representation $\rho:\Gamma\to\PGL_d(\Rb)$ is $P_\theta$-transverse if and only if it is cusped $P_\theta$-Anosov. 
\end{remark}

In the case when $\theta=\Delta$, $P_\Delta$-transverse representations and cusped $P_\Delta$-Anosov representations are also called \emph{Borel transverse representations} and \emph{cusped Borel Anosov representations} respectively. When $\Gamma\subset\PGL_2(\Rb)$ is a convex cocompact free subgroup, (cusped) Borel Anosov representations from $\Gamma$ to $\PGL_d(\Rb)$ can be constructed via a ping pong type argument. However, when $\Gamma\subset\PGL_2(\Rb)$ is a lattice, there are currently only two known families of cusped Borel Anosov representations: the Hitchin representations and the Barbot examples, see Section~\ref{sec: Hitchin} and Appendix \ref{app: Barbot} respectively. The search for more examples of cusped Borel Anosov representations can be formulated as the following question:
\begin{question}\label{question}
When $\Gamma\subset\PGL_2(\Rb)$ is a lattice, are there cusped Borel Anosov representations that are neither Hitchin representations nor the Barbot examples? 
\end{question}

The two main results of this paper are rigidity results about Borel transverse representations of non-elementary discrete subgroups of $\PGL_2(\Rb)$ whose limit set is all of $\partial\Hb^2$. When specialized to lattices in $\PGL_2(\Rb)$, they can be interpreted as providing supporting evidence to a negative answer to the above question. 

If $\Gamma\subset\PGL_2(\Rb)$ is a non-elementary, discrete subgroup and $\rho:\Gamma\to\PGL_d(\Rb)$ is a Hitchin representation, then it follows from the work of Canary, Zhang and Zimmer \cite{CZZ21} that $\rho$ sends every (non-identity) parabolic element in $\Gamma$ to a unipotent element with a single Jordan block, see Theorem \ref{thm: CZZ} and Remark \ref{rem: CZZ}. Our first theorem resolves Question \ref{question} under the additional assumption that the image of $\rho$ contains a unipotent element with a single Jordan block.

\begin{theorem}\label{thm 1}
Suppose that $\Gamma\subset\PGL_2(\Rb)$ is a discrete subgroup with $\Lambda(\Gamma)=\partial\Hb^2$. If $\rho:\Gamma\to\PGL_d(\Rb)$ is a Borel transverse representation whose image contains a unipotent element with a single Jordan block, then $\rho$ is a Hitchin representation.
\end{theorem}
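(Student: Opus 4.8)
The strategy is to show that the limit map $\xi$ of $\rho$ is positive in the sense of Fock--Goncharov, for then $\rho$ is a (cusped) Hitchin representation by the characterization of Canary--Zhang--Zimmer (compare Theorem~\ref{thm: CZZ}). We may assume $d\geq 2$ and that $u:=\rho(\gamma_0)\neq I$, otherwise there is nothing to prove. Since Borel transverse representations send loxodromic elements of $\Gamma$ to $\Delta$--proximal elements and torsion elements to torsion elements, and a nontrivial unipotent is neither, $\gamma_0$ is parabolic. Let $p\in\partial\Hb^2$ be its fixed point, let $P:=\operatorname{Stab}_\Gamma(p)$, and let $p_0$ generate the maximal parabolic cyclic subgroup of $P$, so $\gamma_0=p_0^k$ for some $k\neq 0$. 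Then $\rho(p_0)$ centralizes $u=\rho(p_0)^k$; since every element of the centralizer of a regular unipotent in $\PGL_d(\Rb)$ is the image of a unipotent matrix, and a unipotent whose $k$-th power is a regular unipotent is itself regular, $\rho(p_0)$ is a regular unipotent. Conjugating $\rho$ and rescaling the identification $\partial\Hb^2\setminus\{p\}\cong\Rb$ on which $p_0$ acts by the unit translation, we may assume $\rho(p_0)=\iota(u_1)$ with $u_1=\left(\begin{smallmatrix}1&1\\0&1\end{smallmatrix}\right)$ and $\iota:\PGL_2(\Rb)\to\PGL_d(\Rb)$ the principal representation; the flag $\xi(p)$, being $\rho(P)$-fixed, is then forced to be the unique $\rho(p_0)$-fixed complete flag $E_\bullet$, the standard flag of $\iota$. (If $P$ is infinite dihedral, the order-two element inverting $p_0$ is similarly forced into $\iota(\PGL_2(\Rb))$.)

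Next I would prove that $\xi$ is positive on some open arc $I\ni p$, meaning every cyclically ordered tuple of points of $I$ maps to a positive tuple of flags; this is the main obstacle. For $x\neq p$, $\xi(x)$ is transverse to $E_\bullet$, so, identifying the flags transverse to $E_\bullet$ with the unipotent radical $N$ of $\operatorname{Stab}(E_\bullet)$ (on which $\rho(p_0)\in N$ acts by left translation), $\xi$ restricts to a continuous injective curve $t\mapsto v(t)$ from $\Rb$ to $N$ with $v(t+1)=\rho(p_0)\,v(t)$ and with $v(s)^{-1}v(t)$ in the transverse locus for all $s\neq t$. The crucial input is that $\rho(p_0)=\exp(N_+)$ for $N_+$ the principal nilpotent with positive superdiagonal, i.e.\ $\rho(p_0)$ is a \emph{positive} unipotent for the Borel positivity structure: combining this with the asymptotics of $\rho(p_0)^{\pm n}$ as $n\to\infty$ (governed by the descending powers of $N_+$, which recover the osculating flags of the rational normal curve) shows that near $p$ the curve $\xi$ is a small perturbation of the limit curve of the principal Fuchsian representation, and that transversality of the values $\xi(t)$ among themselves upgrades ``the relevant minors of tuples of $\xi$-values near $p$ are nonzero'' to ``they are positive''. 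Getting the signs right — equivariance under $\rho(p_0)$ alone does not pin them down — and producing a genuine two-sided neighbourhood of $p$ rather than a one-sided germ, is the delicate part. (Alternatively, one can study the Zariski closure of $\rho(\Gamma)$: containing a regular unipotent forces it, once reducibility is dealt with, to be a ``Hitchin-type'' reductive subgroup of $\PGL_d(\Rb)$, but one then still has to rule out the non-Hitchin components of the relevant character variety.)

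Finally I would propagate positivity from $I$ to all of $\partial\Hb^2$. As $\Gamma$ is non-elementary with $\Lambda(\Gamma)=\partial\Hb^2$, its action on $\partial\Hb^2$ is minimal and the fixed-point pairs of loxodromic elements of the finite-index subgroup $\Gamma\cap\PSL_2(\Rb)$ are dense in $\partial\Hb^2\times\partial\Hb^2$ off the diagonal. Given a cyclically ordered tuple $x_1,\dots,x_n\in\partial\Hb^2$, pick such a loxodromic $g$ with attracting fixed point $g^+\in I$ and repelling fixed point not in $\{x_1,\dots,x_n\}$; then $g^mx_i\to g^+$, so $(\xi(g^mx_1),\dots,\xi(g^mx_n))$ lies in the positive locus for $m$ large by the previous step, and applying $\rho(g^{-m})\in\PGL_d(\Rb)$ — which preserves positivity of flag configurations — shows $(\xi(x_1),\dots,\xi(x_n))$ is positive. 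Thus $\xi$ is positive and $\rho$ is a Hitchin representation. (For $d=2$ the assertion is classical, since every Borel transverse representation of such a $\Gamma$ into $\PGL_2(\Rb)$ is already discrete, faithful and Fuchsian.)
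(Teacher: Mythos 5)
Your proposal correctly identifies the overall strategy (show the limit map is positive) and gets the first reduction right: $\gamma_0$ must be parabolic, and a unipotent with a single Jordan block is conjugate to the principal one. But the heart of the argument --- your second step, establishing positivity of $\xi$ on an open arc around $p$ --- is not actually proved; you yourself flag it as ``the main obstacle'' and ``the delicate part.'' The heuristic you offer does not close it: $\rho(p_0)$-equivariance constrains $\xi$ only up to its values on a fundamental domain for the translation, so there is no a priori sense in which $\xi$ near $p$ is ``a small perturbation of the limit curve of the principal Fuchsian representation,'' and transversality alone does not determine the signs of the relevant minors without an additional connectedness input. So as written the proposal has a genuine gap exactly where the real work lies.

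The paper's proof avoids arc-positivity entirely by combining two ideas you are missing. First (Proposition \ref{thm 0}), a continuous transverse map $\mathbb{S}^1\to\Fc(\Rb^d)$ is positive as soon as a \emph{single} triple $(\xi(x),\xi(y),\xi(z))$ is positive; this follows because the set of positive triples is a union of connected components of the set of pairwise transverse triples (Theorem \ref{thm: well-known}), so positivity propagates along paths of transverse triples --- this replaces both your arc step and your loxodromic-density propagation step. Second (Lemma \ref{lem: unipotent}), one positive triple is produced by an explicit computation: writing $u=\rho(\gamma_0)$ as the Pascal matrix $Q_d$ in a suitable basis, the minors of $v'u^tv$ (for $v,v'$ unipotent upper triangular) are polynomials in $t$ whose leading terms are the corresponding minors of the totally positive matrix $Q_d^t$, so $(F,u^t\cdot G,G)=(\xi(x),\xi(\gamma_0^t y),\xi(y))$ is positive for $t$ large. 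This is the precise version of your remark about ``the asymptotics of $\rho(p_0)^{\pm n}$,'' and it is what pins down the signs that your sketch leaves open. If you supply these two ingredients, your outline becomes the paper's proof; without them, the argument is incomplete.
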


\begin{remark}
If $\rho:\Gamma\to\PGL_d(\Rb)$ is a Barbot example, then $d$ is necessarily odd, and $\rho$ sends every parabolic element in $\Gamma$ to a unipotent element in $\PGL_d(\Rb)$ with two Jordan blocks, one of size $j$ and the other of size $d-j$ for some $j\in\{1,\dots,\frac{d-1}{2}\}$, see Appendix \ref{app: Barbot}. As such, the hypothesis of Theorem \ref{thm 1} rules out the need to consider the Barbot examples.
\end{remark}

One might attempt to construct new examples of cusped Borel Anosov representations on a lattice $\Gamma\subset\PGL_2(\Rb)$ via the following ``amalgamation" procedure. 
\begin{itemize}
\item[Step 1:] Realize $\Gamma$ as a free product of two non-elementary, geometrically finite subgroups $\Gamma_1$ and $\Gamma_2$, amalgamated over a cyclic subgroup $\langle\gamma\rangle$. 
\item[Step 2:] Specify a Barbot example $\rho_1:\Gamma_1\to\PGL_d(\Rb)$ and a Hitchin representation $\rho_2:\Gamma_2\to\PGL_d(\Rb)$ so that $\rho_1(\langle\gamma\rangle)$ is conjugate to $\rho_2(\langle\gamma\rangle)$.
\item[Step 3:] Find a cusped Borel Anosov representation $\rho:\Gamma\to\PGL_d(\Rb)$ so that $\rho|_{\Gamma_1}=\rho_1$ and $\rho|_{\Gamma_2}=\rho_2$.
\end{itemize}
There are situations (see for example \cite{GW10,CLS}) where this amalgamation procedure allows one to construct new classes of $P_\theta$-Anosov representations from existing ones. However, our next theorem implies that the amalgamation process described above will never yield a Borel transverse representation.

\begin{theorem}\label{thm 2}
Suppose that $\Gamma\subset\PGL_2(\Rb)$ is a discrete subgroup with $\Lambda(\Gamma)=\partial\Hb^2$, and let $\Gamma'\subset\Gamma$ be a non-elementary subgroup. If $\rho:\Gamma\to\PGL_d(\Rb)$ is a Borel transverse representation such that $\rho|_{\Gamma'}:\Gamma'\to\PGL_d(\Rb)$ is Hitchin, then $\rho$ is Hitchin.
\end{theorem}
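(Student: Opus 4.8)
\emph{Proof strategy.} Let $\xi\colon\partial\Hb^2\to\Fc(\Rb^d)$ be the limit map of $\rho$. The plan is to restate the conclusion as positivity of $\xi$ and then to propagate that positivity from $\Lambda(\Gamma')$ to all of $\partial\Hb^2$ using the $\Gamma$-action. By the characterisation of (cusped) Hitchin representations among Borel transverse ones --- due to Fock--Goncharov and Labourie in the cocompact setting and to Canary--Zhang--Zimmer \cite{CZZ21,CZZ22} in general --- a Borel transverse representation is Hitchin if and only if its limit map is positive in the sense of Fock--Goncharov, so it suffices to show that $\xi$ is positive. Since $\Lambda(\Gamma')\subset\Lambda(\Gamma)=\partial\Hb^2$, the restriction $\xi|_{\Lambda(\Gamma')}$ is $\rho|_{\Gamma'}$-equivariant, transverse and strongly dynamics preserving, hence is the limit map of $\rho|_{\Gamma'}$; as the latter is Hitchin, $\xi|_{\Lambda(\Gamma')}$ is positive. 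Applying the same observation to each conjugate $\gamma\Gamma'\gamma^{-1}$ --- on which $\rho$ restricts to a representation obtained from $\rho|_{\Gamma'}$ by a conjugation in $\PGL_d(\Rb)$ and a reparametrisation by a conjugation in $\PGL_2(\Rb)$, hence again Hitchin --- shows that $\xi$ restricted to $\gamma\cdot\Lambda(\Gamma')$ is positive for every $\gamma\in\Gamma$. Finally $\Lambda(\Gamma)$ is contained in the closure of every $\Gamma$-orbit, so $\bigcup_{\gamma\in\Gamma}\gamma\cdot\Lambda(\Gamma')$ is dense in $\partial\Hb^2$; hence $\xi$ is positive along a dense $\Gamma$-invariant family of perfect subsets. (The orientation subtleties caused by the fact that $\PGL_d(\Rb)$ may reverse the cyclic order underlying positivity are removed by first passing to the index-at-most-two subgroup $\Gamma\cap\PSL_2(\Rb)$, which inherits and detects the Hitchin property.)

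If $\Gamma'$ contains a parabolic element $p$, the argument then concludes at once: as $\rho|_{\Gamma'}$ is Hitchin, $\rho(p)$ is a unipotent element with a single Jordan block (Theorem~\ref{thm: CZZ} and Remark~\ref{rem: CZZ}), so $\rho(\Gamma)$ contains such an element and Theorem~\ref{thm 1} applies. This disposes, for instance, of the case where $\Gamma$ is a non-cocompact lattice and $\Gamma'$ contains one of its cusp groups.

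In general one must work harder. By the gluing property of positive configurations it suffices to show that $(\xi(x),\xi(y),\xi(z))$ is a positive triple for every positively ordered triple $(x,y,z)$ of points of $\partial\Hb^2$: triangulating a larger tuple by a fan then shows, using transversality of $\xi$, that every tuple of $\xi$-values is positive. Now the set $\Oc$ of positively ordered triples of distinct points of $\partial\Hb^2$ is connected, and $\xi$ sends each of these to a pairwise transverse triple of flags. When $d=3$ this already finishes the proof: the unique triple ratio $T$ is continuous and nowhere vanishing on pairwise transverse triples, so the function $(x,y,z)\mapsto T(\xi(x),\xi(y),\xi(z))$ has constant sign on the connected set $\Oc$, and this sign is positive on the nonempty open locus of triples supported on $\Lambda(\Gamma')$; hence $T(\xi(x),\xi(y),\xi(z))>0$ throughout. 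For general $d$ the same argument applies provided one also knows that $\xi$ carries every triple of distinct points to a configuration generic enough that all Fock--Goncharov triple ratios are defined, continuous and nonvanishing along $\xi$; connectedness of $\Oc$ together with the known values on $\Lambda(\Gamma')$ then forces positivity everywhere.

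The step I expect to be the main obstacle is exactly this last one: for $d\geq 4$, that $\xi$ is everywhere generic enough on triples, equivalently that the positively ordered triples with positive $\xi$-image form a closed subset of $\Oc$. This cannot be extracted by soft approximation from the dense family produced above, because each translate $\gamma\cdot\Lambda(\Gamma')$ is a Cantor set of Hausdorff dimension bounded away from $1$, so no triple of distinct points of $\partial\Hb^2$ can be approximated by triples lying in a single translate, whereas mixing translates destroys the positivity information. Overcoming it should require the finer structure of Fock--Goncharov positivity --- in particular the stability of positivity under limits of Frenet configurations --- together with the transversality and $\Gamma$-equivariance of $\xi$ and the positivity of the Frenet subcurves $\xi|_{\gamma\cdot\Lambda(\Gamma')}$. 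Granting this, $\rho$ is Hitchin; and, taking $\Gamma$ to be a lattice realised as a free product of non-elementary geometrically finite subgroups amalgamated over a cyclic subgroup, one recovers the amalgamation statement of the introduction, since such an amalgam restricts to a Hitchin representation on a non-elementary subgroup and would therefore be Hitchin, contradicting that its restriction to the other factor was assumed not to be Hitchin.
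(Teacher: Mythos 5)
Your overall strategy is the right one, and a large part of it is the same as the paper's: use that $\xi|_{\Lambda(\Gamma')}$ is the limit map of the Hitchin representation $\rho|_{\Gamma'}$, hence positive, then propagate positivity to all of $\partial\Hb^2$ by a connectedness argument in the space of transverse triples, using that $\xi$ is continuous and transverse. However, you leave the crucial step of this propagation as an explicit gap --- you write ``Granting this, $\rho$ is Hitchin'' --- and the gap is exactly the content of Proposition~\ref{thm 0} (and its ingredient Theorem~\ref{thm: well-known}), which the paper proves earlier and then invokes here in a single line. Concretely: once you know $\xi|_{\Lambda(\Gamma')}$ is positive, one positive triple $(\xi(x),\xi(y),\xi(z))$ exists, and Proposition~\ref{thm 0} says that a continuous transverse map $\mathbb{S}^1\to\Fc(\Rb^d)$ carrying one triple to a positive configuration is positive. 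The reason this works in all dimensions is Theorem~\ref{thm: well-known}: the set $\Pc$ of positive triples is a union of \emph{connected components} of the set $\Tc$ of pairwise transverse triples. This is stronger than the triple-ratio argument you sketch for $d=3$, and it removes the genericity worry you raise for $d\geq 4$ --- you do not need the triple ratios to stay defined and nonzero along the path, because a continuous path in $\Tc$ starting in $\Pc$ can never leave $\Pc$. Your hand-wave about ``stability of positivity under limits of Frenet configurations'' is a gesture at this closedness, but it is not a proof, and the required fact must be established; the paper does so via elementary minor manipulations in Appendix~\ref{app}.

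Two more minor points. First, the reduction from ``all triples positive'' to ``the map $\xi$ is positive'' is not a soft fan-triangulation argument: Proposition~\ref{prop: positivity basic}(2) reduces positivity of an $n$-tuple to positivity of its $4$-subtuples, not its $3$-subtuples, and the step from positive triples to a positive map again needs the continuity of $\xi$ and Theorem~\ref{thm: well-known} (this is Lemma~\ref{lem: positive map}). Second, the orientation concern and the passage to $\Gamma\cap\PSL_2(\Rb)$ are unnecessary: by Proposition~\ref{prop: positivity basic}(1), positivity of a tuple of flags is invariant under reversal, so the notion of positive map does not depend on a choice of orientation of $\mathbb{S}^1$. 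The detour through conjugates $\gamma\Gamma'\gamma^{-1}$ and density of $\bigcup_\gamma\gamma\cdot\Lambda(\Gamma')$ is likewise not needed (and the assertion that each $\gamma\cdot\Lambda(\Gamma')$ is a Cantor set of dimension bounded away from $1$ is false in general, e.g.\ if $\Gamma'$ has finite index): one positive triple in $\Lambda(\Gamma')$ already suffices for Proposition~\ref{thm 0}. Your observation that Theorem~\ref{thm 1} disposes of the case where $\Gamma'$ contains a parabolic is correct but also unnecessary once Proposition~\ref{thm 0} is available.
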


By Remark \ref{rem: extend} above, Theorems \ref{thm 1} and \ref{thm 2} hold for cusped Borel Anosov representations as well; one simply imposes the additional condition that $\Gamma$ is geometrically finite.

A key tool used in the proofs of Theorem \ref{thm 1} and Theorem \ref{thm 2} (and also in the definition of Hitchin representations) is Fock and Goncharov's notion of positivity for $n$-tuples in $\Fc(\Rb^d)$ for any integer $n\geq3$, see Section \ref{sec: Fock-Goncharov positivity}. With this, one can then define the notion of a positive map from a subset $\Lambda\subset\mathbb{S}^1$ (with $\#\Lambda\geq3$) to $\Fc(\Rb^d)$: we say that a map $\xi:\Lambda\to\Fc(\Rb^d)$ is \emph{positive} if for any integer $n\geq3$, the tuple $(\xi(a_1),\dots,\xi(a_n))$ is positive for all $a_1<\dots<a_n<a_1$ in $\Lambda$ (according to the clockwise cyclic order on $\mathbb{S}^1$). The proofs of both Theorem~\ref{thm 1} and Theorem~\ref{thm 2} rely on the following result about continuous, positive maps, which is a special case of more general results of Guichard, Labourie and Wienhard \cite[Lemma 3.5 and Proposition 3.15]{GLW} in the setting of $\Theta$-positive maps. 

\begin{proposition}\label{thm 0}
Let $\xi:\mathbb{S}^1\to\Fc(\Rb^d)$ be a continuous, transverse map. If there is a pairwise distinct triple of points $x,y,z\in\mathbb{S}^1$ such that $(\xi(x),\xi(y),\xi(z))$ is positive, then $\xi$ is a positive map. 
\end{proposition}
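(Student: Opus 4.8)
The plan is to show that the set of "good" triples—i.e., pairwise distinct triples $(x,y,z)$ with $x,y,z$ in clockwise cyclic order on $\mathbb{S}^1$ for which $(\xi(x),\xi(y),\xi(z))$ is positive—is both open and closed (hence, being nonempty by hypothesis and the cyclically-ordered-triple space being connected, is everything), and then bootstrap from positivity of all triples to positivity of all $n$-tuples. So the argument splits into three parts: (a) the set of positive triples is open in the space $\mathcal T^+$ of cyclically ordered pairwise distinct triples; (b) it is closed in $\mathcal T^+$; (c) positivity of all triples implies positivity of all $n$-tuples.

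For (a): positivity of a triple of flags is characterized by the positivity of a collection of triple ratios and (for the pair) by transversality; these are given by explicit rational functions of the flags that are nonzero on transverse configurations. Since $\xi$ is continuous and transverse, the map $(x,y,z)\mapsto$ (triple ratios of $(\xi(x),\xi(y),\xi(z))$) is continuous on $\mathcal T^+$, and the condition "all triple ratios positive" is an open condition, giving openness. For (c): by Fock–Goncharov's description of positivity of $n$-tuples in terms of a triangulation of an $n$-gon, the positivity of an $n$-tuple is equivalent to the positivity of all the triple ratios attached to the triangles and all the (generalized) cross-ratios / edge functions attached to the internal edges of the triangulation. One shows by an induction on $n$—adding one vertex at a time, i.e. subdividing a triangle—that the relevant coordinates for the $(n{+}1)$-tuple are all positive provided those for sub-triples are; concretely, one uses that every internal-edge coordinate and every triple-ratio appearing in the $n$-gon triangulation already appears (up to the known positivity-preserving transformations) as a coordinate of a configuration of four or fewer of the $\xi(a_i)$'s, and four points reduce to triples by the quadruple-positivity criterion. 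Here one leans on \cite{GLW} exactly as the statement allows.

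**The hard part will be** step (b), closedness: a limit of positive triples could a priori degenerate so that some triple ratio tends to $0$ or $\infty$, i.e. the limiting triple $(\xi(x_0),\xi(y_0),\xi(z_0))$ could fail transversality or sit on the boundary of the positive cone. Transversality of the limit triple is immediate from transversality of $\xi$ (the three limit points are still pairwise distinct because we stay in $\mathcal T^+$, working with a compact sub-family bounded away from the diagonals). The genuine issue is ruling out a triple ratio limiting to $0$. The way I would handle this is not to prove closedness of "positive triples" directly, but to run the open/closed dichotomy on the partition of $\mathcal T^+$ according to the \emph{signs} of the triple-ratio coordinates: by continuity and transversality of $\xi$, no triple ratio ever vanishes on $\mathcal T^+$ (a vanishing triple ratio forces a non-transversality among $\{\xi(x),\xi(y),\xi(z)\}$, contradicting transversality of $\xi$), so each sign function $\mathcal T^+\to\{\pm\}$ is locally constant, hence constant on each connected component. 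Then I only need $\mathcal T^+$ to be connected, and one positive triple to pin down all the signs as positive; this is precisely where Proposition 3.15 of \cite{GLW} is doing the work, and I would cite it for the fact that non-vanishing of the Fock–Goncharov coordinates along a connected family of transverse configurations, together with positivity at one point, forces positivity throughout—first for triples, then, via step (c), for all $n$-tuples, yielding that $\xi$ is a positive map.
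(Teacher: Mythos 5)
Your overall skeleton (propagate positivity of one triple to all triples by a connectedness argument, then bootstrap to $n$-tuples) matches the paper's strategy, but two of the claims you use to fill it in are false, and the second is a genuine gap. First, in step (b) you assert that a vanishing triple ratio forces non-transversality, so that the Fock--Goncharov coordinates are nowhere zero on pairwise transverse triples. This is true for $d=3$ but false for $d\geq 4$: the triple ratios $T_{abc}$ involve factors of the form $\Delta(F^x\wedge G^y\wedge H^z)$ with all of $x,y,z\geq 1$, and such a factor can vanish on a pairwise transverse triple (e.g.\ for $d=4$ one can have $F^1\subset G^2+H^1$ while $F,G,H$ remain pairwise transverse). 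What is actually true, and what the connectedness argument needs, is the weaker statement that the frontier of the set $\Pc$ of positive triples consists of non-transverse triples, i.e.\ that $\Pc$ is a union of connected components of the set $\Tc$ of pairwise transverse triples; this is Theorem \ref{thm: well-known}, which the paper proves via the minors of the unipotent element carrying $H$ to $G$ rather than via coordinates. With that substitution, your (a)--(b) become exactly the paper's proof of the proposition modulo Lemma \ref{lem: positive map}.

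The serious gap is step (c). You claim that ``four points reduce to triples by the quadruple-positivity criterion,'' i.e.\ that positivity of all sub-triples of $(\xi(x),\xi(y),\xi(z),\xi(w))$ yields positivity of the quadruple. No such criterion exists: positivity of a quadruple requires, beyond the triple ratios of the two triangles, the positivity of the edge functions along the diagonal, and these are not determined by (nor sign-controlled by) the sub-triples. Indeed the paper explicitly remarks that the implication ``all triples of $\xi$-images positive $\Rightarrow\xi$ positive'' is \emph{false} without the continuity of $\xi$, so no purely combinatorial reduction of the kind you describe can work. The missing ingredient is a second continuity/connectedness argument at the level of quadruples: in Lemma \ref{lem: positive map} the paper fixes $x<y<z<w$, lets $u(t)$ be the unipotent element fixing $\xi(x)$ and sending $\xi(w)$ to $\xi(t)$, and moves $t$ along the circle, invoking Theorem \ref{thm: well-known} to show that neither $u(z)$ nor $u(z)^{-1}u(y)$ can reach $U_{\geq0}(\Bc)\setminus U_{>0}(\Bc)$ without violating the transversality of $\xi$; the semigroup property of $U_{>0}(\Bc)$ and Proposition \ref{prop: positivity basic}(2) then give all $n$-tuples. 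Citing \cite{GLW} for this step amounts to citing the proposition itself, which defeats the purpose of supplying a proof.
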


In Section \ref{sec 1}, we will recall Fock and Goncharov's notion of positivity of $k$-tuples of complete flags and the definition of Hitchin representations. Then, in Section \ref{sec: thm 0}, we provide an elementary and self-contained proof of Proposition \ref{thm 0}. Finally, we use Proposition~\ref{thm 0} to prove Theorems~\ref{thm 1} and \ref{thm 2} in Section \ref{sec 2}. In the appendices, we give an elementary proof of a well-known fact about positive triples of flags that was used to prove Proposition \ref{thm 0}, and also describe the Barbot examples mentioned above.

\subsection*{Acknowledgements}

We are thankful for helpful conversations with Fanny Kassel and Jaejeong Lee.

\section{Positive tuples and positive maps}\label{sec 1}

\subsection{Fock-Goncharov positivity}\label{sec: Fock-Goncharov positivity}
We say that an upper triangular, unipotent matrix is \emph{totally positive} if its non-trivial minors (i.e. those that are not forced to be $0$ by virtue of the matrix being upper triangular) are positive. Then given an (ordered) basis $\Bc=(e_1,\dots,e_d)$ of $\Rb^d$, we say that a unipotent element in $\PGL_d(\Rb)$ is \emph{totally positive with respect to $\Bc$} if it is represented in the basis $\Bc$ by an upper triangular, unipotent, totally positive matrix. Let 
\[U_{>0}(\Bc)\subset\PGL_d(\Rb)\] 
denote the set of unipotent elements that are totally positive with respect to $\Bc$, and let 
\[U_{\geq0}(\Bc)\subset\PGL_d(\Rb)\] 
denote the closure of $U_{>0}(\Bc)$. Note that the elements in $U_{\geq0}(\Bc)$ are exactly the ones where all the non-trivial minors are non-negative. Using well-known formulas for how minors behave under products, it is straightforward to verify that both $U_{>0}(\Bc)$ and $U_{\geq0}(\Bc)$ are sub-semigroups of $\PGL_d(\Rb)$.

Recall that if $F,G\in\Fc(\Rb^d)$, then $F$ and $G$ are \emph{transverse} if $F^k+G^{d-k}=\Rb^d$ for all $k\in\{1,\dots,d-1\}$. When $n\geq3$, we say that an $n$-tuple of complete flags $(F_1,\dots,F_n)$ in $\Fc(\Rb^d)$ is \emph{positive} if $F_1$ and $F_n$ are transverse, and there is a basis $\Bc=(e_1,\dots,e_d)$ of $\Rb^d$ and elements $u_2,\dots,u_{n-1}\in U_{>0}(\Bc)$ such that $e_i\in F_1^i\cap F_n^{d-i+1}$ for all $i\in\{1,\dots,d\}$, and $F_j=(u_{n-1}\cdots u_j)\cdot F_n$ for all $j\in\{2,\dots,n-1\}$. The fact that $U_{>0}(\Bc)$ is a semigroup implies that if $(F_1,\dots,F_n)$ is positive, then so is $(F_1,F_{i_1},\dots,F_{i_\ell},F_n)$ for all integers $i_1,\dots,i_\ell$ such that $1<i_1<\dots<i_\ell<n$. Recall from the introduction that given a subset $\Lambda$ of $\mathbb{S}^1$, a map $\xi: \Lambda \to\Fc(\Rb^d)$ is \emph{positive} provided that if $n\geq3$ and $(x_1, \dotsc, x_n)$ is a cyclically ordered subset of pairwise distinct points in $\Lambda$, then $(\xi(x_1), \dotsc, \xi(x_n))$ is a positive $n$-tuple of flags.

The following proposition summarizes the basic properties of positive tuples of flags. It follows easily from a well-known parameterization result of Fock and Goncharov \cite[Theorem 9.1(a)]{FG} (see Kim-Tan-Zhang \cite[Observation 3.20]{KTZ}).

\begin{proposition} \label{prop: positivity basic}
Let $F_1,\dots,F_n$ be flags in $\Fc(\Rb^d)$. 
\begin{enumerate}
\item If $n\geq3$, then the following are equivalent:
\begin{itemize}
\item $(F_1,F_2,\dots,F_n)$ is positive,
\item $(F_n,\dots,F_2,F_1)$ is positive,
\item $(F_2,\dots,F_n,F_1)$ is positive,
\item $g\cdot(F_1,F_2,\dots,F_n)$ is positive for some/all $g\in\PGL_d(\Rb)$.
\end{itemize}
In particular, if $(F_1,\dots,F_n)$ is positive, then $(F_{i_1},\dots,F_{i_\ell})$ is positive for all $1\leq i_1<i_2<\dots<i_\ell\leq n$, and so $F_i$ and $F_j$ are transverse for all distinct pairs $i,j\in\{1,\dots,n\}$.
\item If $n\geq4$, then $(F_1,\dots,F_n)$ is positive if and only if $(F_1,\dots,F_{n-1})$ is positive and $(F_1,F_i,F_{n-1},F_n)$ is positive for some/all $i=2,\dots,n-2$. In particular, $(F_1,\dots,F_n)$ is positive if and only if $(F_{i_1},F_{i_2},F_{i_3},F_{i_4})$ is positive for all $1\leq i_1<i_2<i_3<i_4\leq n$.
 \end{enumerate}
\end{proposition}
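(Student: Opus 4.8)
The plan is to deduce both parts of Proposition~\ref{prop: positivity basic} from the Fock--Goncharov parameterization of positive configurations; the precise form we use follows from \cite[Theorem~9.1(a)]{FG} and is conveniently summarized in \cite[Observation~3.20]{KTZ}. I would isolate the following features of that parameterization and use only them. Fix a convex $n$-gon with vertices cyclically labelled $1,\dots,n$ and a triangulation $\Tc$ of it. To a tuple $(F_1,\dots,F_n)$ of pairwise transverse flags in $\Fc(\Rb^d)$ one associates, for each triangle of $\Tc$, a finite set of \emph{triple ratios} depending only on the three flags indexing that triangle, and, for each internal edge of $\Tc$, a finite set of \emph{edge invariants} depending only on the two flags indexing the edge together with the two triangles of $\Tc$ adjacent to it. These quantities are $\PGL_d(\Rb)$-invariant, the parameterization controls how they change under a flip of the triangulation, and $(F_1,\dots,F_n)$ is positive if and only if all of them are positive for some (equivalently, for every) $\Tc$. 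Finally, a positive tuple is automatically pairwise transverse: this follows from the definition of positivity together with the fact that, for a basis $\Bc$ adapted to a transverse pair $(A,B)$, the set $U_{>0}(\Bc)\cdot B$ consists of flags transverse both to $A$ and to $B$, and the semigroup property of $U_{>0}(\Bc)$.

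For Part~(1): the equivalence with $g\cdot(F_1,\dots,F_n)$ for some/all $g\in\PGL_d(\Rb)$ is immediate from the definition of positivity --- if $\Bc$ and $u_2,\dots,u_{n-1}\in U_{>0}(\Bc)$ witness positivity of $(F_1,\dots,F_n)$, then $g\Bc$ is adapted to $(gF_1,gF_n)$, the matrix of $gu_jg^{-1}$ in the basis $g\Bc$ equals the matrix of $u_j$ in $\Bc$, and $gF_j=(gu_{n-1}g^{-1})\cdots(gu_jg^{-1})\cdot gF_n$, so applying this to $g$ and to $g^{-1}$ gives both implications. The equivalences with $(F_n,\dots,F_1)$ and with $(F_2,\dots,F_n,F_1)$ are precisely invariance of positivity under the reflection, respectively the rotation, of the labelled $n$-gon: each is an automorphism of the $n$-gon, carrying triangulations to triangulations and triple ratios and edge invariants to those of the relabelled tuple (in the reflection case the orientation is reversed, so an individual coordinate may be replaced by a different coordinate or by its reciprocal, but ``all coordinates positive'' is preserved regardless). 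For the sub-tuple claim, given $1\leq i_1<\dots<i_\ell\leq n$ I would choose a triangulation $\Tc$ of the $n$-gon restricting to a triangulation of the sub-$\ell$-gon on $i_1,\dots,i_\ell$ (triangulate the sub-polygon first, then the complementary regions); every triple ratio and edge invariant of $(F_{i_1},\dots,F_{i_\ell})$ relative to the induced triangulation is then one of the coordinates of $(F_1,\dots,F_n)$ relative to $\Tc$, so positivity descends. Transversality of an arbitrary pair $F_i,F_j$ with $n\geq 3$ follows by choosing $k\notin\{i,j\}$ and observing that a suitable cyclic rotation of the positive $3$-sub-tuple on $\{i,j,k\}$ has $F_i$ and $F_j$ as its first and last entries, the first and last flags of a positive triple being transverse by definition.

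For Part~(2): fix $i\in\{2,\dots,n-2\}$ and choose a triangulation $\Tc$ of the labelled $n$-gon that contains the chord $\{1,n-1\}$ and, on the $(n-1)$-gon cut off on the vertices $1,\dots,n-1$, contains the triangle $\{1,i,n-1\}$. The triangle of $\Tc$ on the other side of the chord $\{1,n-1\}$ is then $\{1,n-1,n\}$, and the two triangles of $\Tc$ adjacent to that chord are $\{1,i,n-1\}$ and $\{1,n-1,n\}$. Hence the coordinates of $(F_1,\dots,F_n)$ relative to $\Tc$ split as the coordinates of $(F_1,\dots,F_{n-1})$ relative to the induced triangulation, together with the triple ratios of $\{1,n-1,n\}$ and the edge invariants of $\{1,n-1\}$; and the coordinates of the quadruple $(F_1,F_i,F_{n-1},F_n)$ relative to the triangulation by the chord $\{1,n-1\}$ consist of the triple ratios of $\{1,i,n-1\}$ and of $\{1,n-1,n\}$ together with the same edge invariants of $\{1,n-1\}$, the triple ratios of $\{1,i,n-1\}$ being among the coordinates of $(F_1,\dots,F_{n-1})$. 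Reading off these lists shows $(F_1,\dots,F_n)$ is positive if and only if $(F_1,\dots,F_{n-1})$ is positive and $(F_1,F_i,F_{n-1},F_n)$ is positive. The ``some $i$''/``all $i$'' clause: for the ``if'' direction a single positive quadruple already supplies positivity of all coordinates of $(F_1,\dots,F_n)$ beyond those of $(F_1,\dots,F_{n-1})$, so one $i$ suffices; for the ``only if'' direction, positivity of $(F_1,\dots,F_n)$ passes to every quadruple $(F_1,F_j,F_{n-1},F_n)$ by the sub-tuple claim of Part~(1). The final assertion follows by induction on $n$: it is vacuous for $n=4$, and for $n>4$, if every $4$-element ordered sub-tuple of $(F_1,\dots,F_n)$ is positive then in particular so is every such sub-tuple of $(F_1,\dots,F_{n-1})$, whence $(F_1,\dots,F_{n-1})$ is positive by induction; as $(F_1,F_2,F_{n-1},F_n)$ is positive by hypothesis, the equivalence just proved (with $i=2$) gives positivity of $(F_1,\dots,F_n)$, the converse implication being the sub-tuple claim of Part~(1).

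I expect the only genuinely non-formal ingredient to be the Fock--Goncharov parameterization itself --- that the semigroup definition of positivity agrees with positivity of all triple ratios and edge invariants, that these are local to triangles and edges, and that they transform as described under flips and under relabelings of the $n$-gon. Everything else above is bookkeeping with triangulations and a short induction. The two places needing the most care in a full write-up are the exact effect of the orientation-reversing reflection on individual coordinates in Part~(1) (only preservation of ``all coordinates positive'' is used), and, in Part~(2), checking that positivity of all coordinates with respect to the specific triangulation $\Tc$ already forces the whole $n$-tuple to be pairwise transverse, and hence positive --- which it does, since those coordinates reconstruct the configuration with $F_n$ occupying an ``ear'' glued along the chord $\{1,n-1\}$.
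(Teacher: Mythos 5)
Your proposal is correct and follows essentially the same route as the paper, which gives no proof at all but simply asserts that the proposition "follows easily" from the Fock--Goncharov parameterization (\cite[Theorem 9.1(a)]{FG}, see \cite[Observation 3.20]{KTZ}) --- precisely the triangulation/triple-ratio/edge-invariant bookkeeping you carry out. Your write-up supplies the details the paper leaves to the reader, including the two delicate points (behaviour of coordinates under the reflection, and that positive coordinates reconstruct a genuinely positive, hence pairwise transverse, configuration), both of which are indeed part of the cited parameterization result.
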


Let $\Pc$ denote the set of positive triples of flags in $\Fc(\Rb^d)$, and let $\Tc$ denote the set of pairwise transverse triples of flags in $\Fc(\Rb^d)$. The following theorem is also a well-known property of positive triples of flags, which has been generalized to the setting of triples of $\Theta$-positive flags by Guichard, Labourie and Wienhard \cite[Proposition 2.5(1)]{GLW}. We provide an elementary proof in Appendix~\ref{app}. 

\begin{theorem}\label{thm: well-known}
Let $F$, $G$ and $H$ be complete flags in $\Fc(\Rb^d)$ such that both $G$ and $H$ are transverse to $F$. Let $u\in\PGL_d(\Rb)$ be the unipotent element that fixes $F$ and sends $H$ to $G$, and let $\Bc=(e_1,\dots,e_d)$ be any basis of $\Rb^d$ such that $e_k\in F^k\cap H^{d-k+1}$ for all $k\in\{1,\dots,d\}$. If $u\in U_{\geq0}(\Bc)-U_{>0}(\Bc)$, then $G$ and $H$ are not transverse. In particular, $\Pc$ is a union of connected components of $\Tc$. 
\end{theorem}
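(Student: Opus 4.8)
\textbf{Proof proposal for Theorem \ref{thm: well-known}.}

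The plan is to reduce the statement to a computation in $\SL_2(\Rb)$ embedded diagonally (via the relevant minors) and to exploit the semigroup structure of $U_{\geq 0}(\Bc)$. First I would set up coordinates: fix the basis $\Bc$ so that $F^k = \operatorname{span}(e_1,\dots,e_k)$ and $H^{d-k+1} = \operatorname{span}(e_k,\dots,e_d)$, i.e. $H$ is the ``opposite'' flag to $F$ with respect to $\Bc$. Since $u$ is upper triangular unipotent in this basis, $u \cdot F = F$ automatically, and $G = u \cdot H$ is determined by $u$. The hypothesis $u \in U_{\geq 0}(\Bc) - U_{>0}(\Bc)$ means every non-trivial minor of $u$ is $\geq 0$ but at least one vanishes. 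The goal is to produce an index $k$ for which $G^k + H^{d-k} \neq \Rb^d$, i.e. $G^k \cap H^{d-k} \neq 0$; equivalently, the $(k \times k)$-minor of the matrix whose columns are a basis of $G^k$ followed by a basis of $H^{d-k}$ vanishes.

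Next I would identify which minor controls transversality of $G^k$ and $H^{d-k}$. Writing $G^k = u \cdot H^k = \operatorname{span}(u e_k, u e_{k+1}, \dots, u e_d)$ (the last $d-k+1$ columns of $u$ span $H^k$'s image) — wait, more carefully, $H^k = \operatorname{span}(e_{d-k+1},\dots,e_d)$, so $G^k = \operatorname{span}(u e_{d-k+1},\dots,u e_d)$, spanned by the last $k$ columns of $u$. Then $G^k$ and $H^{d-k} = \operatorname{span}(e_{k+1},\dots,e_d)$ fail to be transverse precisely when the $k \times k$ minor of $u$ formed by rows $1,\dots,k$ and columns $d-k+1,\dots,d$ is zero. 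This is exactly the ``anti-diagonal'' block minor of $u$, which is one of the non-trivial minors, hence $\geq 0$ by hypothesis. So the content is: \emph{if $u \in U_{\geq 0}(\Bc)$ and some non-trivial minor of $u$ vanishes, then some anti-diagonal block minor vanishes.} I expect this to follow from the standard theory of totally nonnegative matrices — specifically, a totally nonnegative unipotent upper triangular matrix lies in the interior $U_{>0}(\Bc)$ if and only if a specific distinguished family of minors (a ``test set,'' e.g. the solid anti-diagonal minors, or the initial minors in the sense of Fomin–Zelevinsky) are all strictly positive. Thus vanishing of any minor forces vanishing of one in the test set, and the anti-diagonal block minors are exactly such a test set. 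I would invoke this (citing Fomin–Zelevinsky or Lusztig, or proving the special case directly by induction on $d$, factoring $u$ as a product of Chevalley generators $x_i(t)$ and tracking which minors can vanish). The main obstacle is making this reduction clean: one must either quote the right total-positivity criterion or give a hands-on argument that the anti-diagonal minors detect the boundary, and one must correctly match up ``$G^k \pitchfork H^{d-k}$'' with the right minor of $u$, being careful about the index bookkeeping induced by the choice $e_k \in F^k \cap H^{d-k+1}$.

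Finally, for the ``in particular'' statement that $\Pc$ is a union of connected components of $\Tc$: since $\Pc \subset \Tc$ and $\Pc$ is open in $\Tc$ (positivity of a triple is, by the Fock–Goncharov parameterization in Proposition \ref{prop: positivity basic}, an open condition — it says a coordinate lies in the positive orthant), it suffices to show $\Pc$ is closed in $\Tc$. So suppose $(F_n, G_n, H_n) \to (F, G, H)$ with each triple positive and the limit pairwise transverse. By applying elements of $\PGL_d(\Rb)$ (using Proposition \ref{prop: positivity basic}(1)) we may normalize $F_n = F$, $H_n = H$ for all $n$; then $G_n = u_n \cdot H$ with $u_n \in U_{>0}(\Bc)$ and $u_n \to u$ where $u \cdot H = G$, so $u \in U_{\geq 0}(\Bc)$ by closedness. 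If $u \notin U_{>0}(\Bc)$, the main claim just proved gives that $G$ and $H$ are not transverse, contradicting pairwise transversality of the limit. Hence $u \in U_{>0}(\Bc)$ and $(F,G,H) = (F, u\cdot H, H)$ is positive, so $\Pc$ is closed in $\Tc$, completing the proof.
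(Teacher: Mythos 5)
Your reduction coincides with the paper's: after fixing the adapted basis $\Bc$ so that $H$ is the flag opposite to $F$, transversality of $G^k=u\cdot H^k$ and $H^{d-k}$ is governed precisely by the top-right corner minor $\det\bigl(u_{(1,\dots,k),(d-k+1,\dots,d)}\bigr)$ (your index bookkeeping here is correct and matches the final display of the paper's proof), and the whole theorem reduces to the claim that a totally nonnegative unipotent upper-triangular matrix with a vanishing non-trivial minor must have a vanishing corner minor. The difference lies in how that claim is handled. The paper's Appendix~\ref{app} is devoted to an elementary, self-contained proof of exactly this: Lemma \ref{lem:consecutive} shows that a vanishing non-trivial minor of minimal size can be taken with consecutive row and column index sets, and Lemma \ref{lem:righttopcorner} then propagates the vanishing out to the top-right corner. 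You instead propose to quote the total-positivity literature, which is viable but requires care about \emph{which} criterion you invoke: the Fomin--Zelevinsky initial-minor criterion certifies total positivity of an arbitrary matrix and needs on the order of $d^2$ minors, and the $d-1$ corner minors are far too few to form a test set in that unconditional sense (the unipotent group has dimension $d(d-1)/2$). What you actually need is Lusztig's boundary criterion: for $u$ already in $U_{\geq0}(\Bc)$, membership in $U_{>0}(\Bc)$ is equivalent to the non-vanishing of the $d-1$ generalized minors $\Delta_{\omega_i,w_0\omega_i}$, which in type $A$ are exactly the corner minors. With that precise citation (or with your alternative sketch via a factorization into Chevalley generators, which would essentially reprove the paper's two lemmas) the argument closes. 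Your handling of the ``in particular'' clause --- openness of $\Pc$ in $\Tc$ from the Fock--Goncharov parameterization, plus closedness by normalizing $(F_n,H_n)=(F,H)$, passing to a limit $u\in U_{\geq0}(\Bc)$, and excluding the boundary via the main claim --- is correct, and is in fact spelled out more explicitly than in the paper, which leaves that deduction to the reader.
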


\subsection{Hitchin representations}\label{sec: Hitchin}
Suppose for now that $\Gamma\subset\PGL_2(\Rb)$ is surface group (i.e. $\Gamma$ is cocompact and torsion-free). Then the discrete and faithful representations from $\Gamma$ to $\PGL_2(\Rb)$ form a single connected component of $\Hom(\Gamma,\PGL_2(\Rb))/\PGL_2(\Rb)$, known as the Teichm\"uller component. Hitchin \cite{Hitchin} noticed that for all $d\geq2$, there is a distinguished connected component of $\Hom(\Gamma,\PGL_d(\Rb))/\PGL_d(\Rb)$ that is analogous to the Teichm\"uller component. Today, this connected component is commonly known as the \emph{Hitchin component}, and the \emph{Hitchin representations} are the ones whose conjugacy class lies in the Hitchin component. Fock and Goncharov \cite{FG} characterized the Hitchin representations as the representations for which there exists a $\rho$-equivariant positive map $\xi:\Lambda(\Gamma)\to\Fc(\Rb^d)$, and Labourie \cite{Lab06} showed that every Hitchin representation is (cusped) Borel Anosov.

Motivated by Fock and Goncharov's characterization of Hitchin representations, Canary, Zhang and Zimmer \cite{CZZ22} extended the notion of Hitchin representations to the case when $\Gamma$ is a discrete subgroup of $\PGL_2(\Rb)$. 

\begin{definition}
Let $\Gamma\subset\PGL_2(\Rb)$ be a non-elementary, discrete subgroup. A representation $\rho:\Gamma\to\PGL_d(\Rb)$ is \emph{Hitchin} if there is a continuous, $\rho$-equivariant, positive map $\xi:\Lambda(\Gamma)\to\Fc(\Rb^d)$. 
\end{definition}

Labourie's result can also be generalized to this case using the proof of \cite[Theorem~1.4]{CZZ21}.

\begin{theorem}\label{thm: CZZ}
Every Hitchin representation $\rho:\Gamma\to\PGL_d(\Rb)$ is Borel transverse, and the continuous, $\rho$-equivariant, positive map is the limit map of $\rho$ (and hence is unique). Furthermore, $\rho$ sends parabolic elements in $\Gamma$ to unipotent elements in $\PGL_d(\Rb)$ with a single Jordan block.
\end{theorem}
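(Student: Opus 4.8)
The goal is to prove Theorem \ref{thm: CZZ}: every Hitchin representation is Borel transverse with the positive equivariant map as its (unique) limit map, and parabolics go to regular unipotents.

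\textbf{Plan of proof.} The strategy is to leverage the work of \cite{CZZ21}, specifically the proof of their Theorem 1.4, and adapt it to the setting where $\Gamma$ is merely a non-elementary discrete subgroup of $\PGL_2(\Rb)$ rather than geometrically finite. Let $\rho:\Gamma\to\PGL_d(\Rb)$ be Hitchin, with continuous $\rho$-equivariant positive map $\xi:\Lambda(\Gamma)\to\Fc(\Rb^d)$. First I would verify that $\xi$ satisfies the three defining properties of the limit map of a $P_\Delta$-transverse representation. Transversality is immediate from Proposition \ref{prop: positivity basic}(1): any two distinct points of $\Lambda(\Gamma)$ sit inside a positive triple (using that $\#\Lambda(\Gamma)\geq 3$ since $\Gamma$ is non-elementary), hence their images are transverse flags. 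Equivariance is part of the hypothesis. The substantive point is strong dynamics preservation.

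\textbf{Strong dynamics preservation.} Here I would argue as follows. Fix a sequence $\gamma_n\in\Gamma$ with $\gamma_n\cdot b_0\to x$ and $\gamma_n^{-1}\cdot b_0\to y$, with $x,y\in\Lambda(\Gamma)$. Pick three pairwise distinct points $a_1,a_2,a_3\in\Lambda(\Gamma)\setminus\{y\}$ (possible since $\Lambda(\Gamma)$ is infinite); then $\gamma_n\cdot a_i\to x$ for each $i$. Because $\xi$ is positive, the flags $\xi(a_1),\xi(a_2),\xi(a_3)$ form a positive, hence pairwise transverse, triple, and the positive semigroup structure controls how nested positive configurations shrink: the key elementary fact, already available via Proposition \ref{prop: positivity basic} and Theorem \ref{thm: well-known}, is that for a positive map the flags $\xi(\gamma_n\cdot a_i)$ must converge to a common flag, which by continuity of $\xi$ is $\xi(x)$, and moreover the ``attracting'' behaviour forces $\rho(\gamma_n)\cdot F\to\xi(x)$ for every $F$ transverse to the limit of $\rho(\gamma_n^{-1})\cdot$(flags based near $a_i$), i.e. for every $F$ transverse to $\xi(y)$. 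Concretely one sandwiches $F$ between $\rho(\gamma_n)\cdot\xi(a_1)$ and $\rho(\gamma_n)\cdot\xi(a_3)$ in a positive quadruple and uses that positive quadruples with three vertices collapsing force the fourth to collapse too. This is exactly the mechanism in \cite[proof of Theorem~1.4]{CZZ21}, and since it is local near $x$ and $y$ it does not require geometric finiteness; I would cite that proof and indicate the (minor) modifications.

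\textbf{Uniqueness and parabolics.} Uniqueness of the limit map follows from the general principle noted after the definition of $P_\theta$-transverse representations: strong dynamics preservation pins down $\xi$ on the conical limit points, which are dense in $\Lambda(\Gamma)$, and continuity extends this to all of $\Lambda(\Gamma)$. For the statement about parabolics, let $\gamma\in\Gamma$ be parabolic with fixed point $p\in\Lambda(\Gamma)$. Applying strong dynamics preservation to both $\gamma^n$ and $\gamma^{-n}$ shows $\rho(\gamma)$ fixes the flag $\xi(p)$ and has $\xi(p)$ as its unique attracting and repelling fixed flag in $\Fc(\Rb^d)$; combined with positivity of $(\xi(a),\xi(\gamma\cdot a),\xi(\gamma^2\cdot a),\ldots)$ for $a\in\Lambda(\Gamma)\setminus\{p\}$, which forces $\rho(\gamma)\in U_{\geq 0}(\Bc)$ for a suitable basis adapted to $\xi(p)$, one concludes $\rho(\gamma)$ is unipotent. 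That it has a single Jordan block is then a standard consequence: a unipotent fixing a unique complete flag and acting with a unique attracting/repelling flag (equivalently, one lying in the open positive semigroup cell of a full-flag-fixing parabolic) must be regular. Again this is precisely \cite[Theorem~1.4 and its proof]{CZZ21}, which only uses the positive/transverse structure locally around $p$.

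\textbf{Main obstacle.} The one genuine thing to check is that dropping ``geometrically finite'' to ``non-elementary discrete'' does not break the argument in \cite{CZZ21}: their proof of Theorem 1.4 is phrased for geometrically finite $\Gamma$, but the dynamical and positivity inputs it uses are all statements about sequences $\gamma_n$ and about individual parabolic elements, not about the global structure of $\Gamma$ or a fundamental domain. So I expect the harder (though still routine) part is to carefully isolate which lemmas of \cite{CZZ21} are used and confirm each holds verbatim for non-elementary discrete subgroups, using only that $\Lambda(\Gamma)$ is infinite and $\Gamma$-invariant and that $\xi$ is continuous, positive and equivariant. No new ideas should be needed beyond this bookkeeping.
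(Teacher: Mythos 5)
Your proposal takes essentially the same route as the paper: the paper gives no independent argument for Theorem \ref{thm: CZZ}, but simply invokes the proof of \cite[Theorem~1.4]{CZZ21} together with Remark \ref{rem: CZZ}, which observes that that proof never uses geometric finiteness of $\Gamma$. Your sketch of the mechanism (collapsing positive configurations for strong dynamics preservation, and positivity forcing parabolics into a single Jordan block) is consistent with, and somewhat more detailed than, what the paper records, and your identified ``main obstacle'' is exactly the content of that remark.
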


\begin{remark}\label{rem: CZZ}
Even though \cite[Theorem 1.4]{CZZ21} is stated only in the case when $\Gamma\subset\PGL_2(\Rb)$ is geometrically finite, the proof does not use the geometric finiteness of $\Gamma$.
\end{remark}

\section{Proof of Proposition \ref{thm 0}}\label{sec: thm 0}
To prove Proposition \ref{thm 0}, we will use the following lemma, which is already well-known to experts (see for example \cite[Proposition 3.15]{GLW}). We give an elementary proof of the lemma for the reader's convenience. We remark that the lemma is false without the continuity assumption on $\xi$.

\begin{lemma}\label{lem: positive map}
If $\xi:\mathbb{S}^1\to\Fc(\Rb^d)$ is a continuous map such that $(\xi(a),\xi(b),\xi(c))$ is positive for every pairwise distinct triple $a,b,c\in\mathbb{S}^1$, then $\xi$ is a positive map.
\end{lemma}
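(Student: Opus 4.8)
The plan is to reduce the statement, using the combinatorics of positive tuples, to the single case of $4$-tuples, and then to settle that case by a connectedness argument whose essential input is Theorem~\ref{thm: well-known}.

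First I would note that $\xi$ is transverse: for distinct $x,y\in\mathbb S^1$, picking any third point $z$ makes $(\xi(x),\xi(y),\xi(z))$ positive, so $\xi(x),\xi(y)$ are transverse by Proposition~\ref{prop: positivity basic}(1). Hence $\xi^{\times n}$ carries $n$-tuples of distinct points into the set $\mathcal{T}_n$ of $n$-tuples of pairwise transverse flags. By the last sentence of Proposition~\ref{prop: positivity basic}(2), a transverse $n$-tuple is positive once all of its $4$-sub-tuples are, and every $4$-element sub-tuple of a cyclically ordered $n$-tuple is again cyclically ordered; so the whole lemma reduces to the claim: \emph{for every clockwise-ordered $a<b<c<d<a$ in $\mathbb S^1$, the quadruple $(\xi(a),\xi(b),\xi(c),\xi(d))$ is positive.}

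Next I would show that the set $\mathcal{P}_4$ of positive $4$-tuples of flags is open and closed in $\mathcal{T}_4$; equivalently, that positivity is locally constant along continuous paths in $\mathcal{T}_4$. Given such a path, after applying a continuous family of elements of $\PGL_d(\Rb)$ (legitimate by Proposition~\ref{prop: positivity basic}(1)) I may assume the first and last flags are the standard and anti-standard flags of a fixed basis $\Bc$; the two middle flags are then $a(t)\cdot(\mathrm{anti\text{-}std})$ and $b(t)\cdot(\mathrm{anti\text{-}std})$ for continuously varying upper triangular unipotents $a(t),b(t)$, and unwinding the definition, the $4$-tuple is positive exactly when for some diagonal sign matrix $D$ one has $b(t)\in U_{>0}(D\Bc)$ and $b(t)^{-1}a(t)\in U_{>0}(D\Bc)$. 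The sets $U_{>0}(D\Bc)$ over the finitely many sign classes are pairwise disjoint and open, so such a $D$ is unique and varies locally constantly whenever it exists; and Theorem~\ref{thm: well-known}, applied to the transverse pairs $\{F_3(t),F_4(t)\}$ and (after applying $b(t)^{-1}$) $\{F_2(t),F_3(t)\}$, forbids $b(t)$ or $b(t)^{-1}a(t)$ from ever lying in $U_{\ge0}(D\Bc)\smallsetminus U_{>0}(D\Bc)$. Thus along the path each of the conditions ``$b(t)\in U_{>0}(D\Bc)$'' and ``$b(t)^{-1}a(t)\in U_{>0}(D\Bc)$'' is open with open complement, so positivity of the $4$-tuple is locally constant.

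Finally I would conclude. The space $\mathcal{Q}\subset(\mathbb S^1)^4$ of clockwise-ordered quadruples is connected, and by the first step $\xi^{\times4}$ maps it continuously into $\mathcal{T}_4$; by the previous step its image is either contained in $\mathcal{P}_4$ or disjoint from it, so it is enough to produce one clockwise quadruple with positive image. To do this I would start from a positive triple in the image, say $(\xi(a),\xi(c),\xi(e))$ with $a<c<e$ clockwise, normalize so that $\xi(a),\xi(e)$ are standard/anti-standard and $\xi(c)=v\cdot(\mathrm{anti\text{-}std})$ with $v\in U_{>0}(\Bc)$, write $v=v_1v_2$ with $v_1,v_2\in U_{>0}(\Bc)$ (possible since $U_{>0}(\Bc)$ is an open sub-semigroup accumulating at the identity), exhibit the positive $4$-tuple $(\xi(a),\xi(c),G,\xi(e))$ with $G:=v_1\cdot(\mathrm{anti\text{-}std})$ as close to $\xi(c)$ as desired, and then use the clopen property of $\mathcal{P}_4$ to deform $G$ to $\xi(d)$ for a genuine point $d\in(c,e)$ close to $c$, landing a positive $4$-tuple inside $\xi^{\times4}(\mathcal{Q})$; combined with the connectedness of $\mathcal{Q}$ this proves the claim. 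The step I expect to be the main obstacle is precisely this last one: any attempt to get a positive $4$-tuple by a degeneration (letting two of the four points of $\mathbb S^1$ collide) leaves $\mathcal{T}_4$ and is useless, so the real content — and the only place continuity of $\xi$ is genuinely used — is in verifying that $G$ and $\xi(d)$ lie in the same connected component of the space of flags transverse to $\xi(a),\xi(c),\xi(e)$, i.e.\ in identifying the correct ``sign class''; I would settle it by following continuity along a small arc of $\mathbb S^1$ near $c$ and invoking Theorem~\ref{thm: well-known} once more to exclude the bad boundary, exactly as in the proof that $\mathcal{P}_4$ is clopen.
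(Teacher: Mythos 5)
Your overall plan — reduce to quadruples via Proposition~\ref{prop: positivity basic}(2), then propagate positivity along continuous paths of transverse tuples by invoking Theorem~\ref{thm: well-known} to forbid crossing $U_{\geq0}(\Bc)\smallsetminus U_{>0}(\Bc)$ — is the same engine the paper runs, and your ``clopen'' step is a correct and essentially equivalent packaging of that engine. The difference is organizational: you first prove $\Pc_4$ is clopen in $\Tc_4$, then invoke connectedness of the space $\mathcal Q$ of cyclically ordered quadruples, and so you are forced into a separate \emph{base case}: exhibiting a single positive $4$-tuple of the form $(\xi(a),\xi(c),\xi(d),\xi(e))$. The paper avoids this entirely by treating each quadruple $x<y<z<w$ on its own: positivity of the sub-triple $(\xi(x),\xi(y),\xi(w))$ (which is given by hypothesis) anchors the basis $\Bc$, and then two continuity arguments along arcs of $\mathbb S^1$ — for $t\mapsto u(t)$ on $[y,z]$ and for $t\mapsto u(t)^{-1}u(y)$ on $[z,w]$ — show $u(z)$ and $u(z)^{-1}u(y)$ lie in $U_{>0}(\Bc)$, finishing the proof with no base case required.

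The place your write-up actually has a gap is precisely the base case, and you flag it yourself. The step ``deform $G=v_1\cdot F_-$ to $\xi(d)$'' needs a path between $G$ and $\xi(d)$ \emph{inside} the set of flags transverse to all of $\xi(a),\xi(c),\xi(e)$, and there is no obvious such path: both flags sit near $\xi(c)$, but the locus of flags transverse to $\xi(c)$ is locally disconnected near $\xi(c)$ (already for $d=2$, deleting a point from an arc of $\mathbb{RP}^1$ disconnects it), so ``close'' does not give ``same component.'' Your last sentence gestures at the correct fix — track a continuous family of unipotents $b(s)$ along the arc $s\in(c,e)$ and use Theorem~\ref{thm: well-known} together with the disjointness of the sets $U_{>0}(D\Bc)$ to pin down the sign class — but when one writes that out carefully it is exactly the paper's direct argument, and the auxiliary flag $G$ plays no role. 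So the ``clopen $+$ connectedness $+$ base case'' route is not wrong in spirit, but it duplicates the continuity argument (once for clopenness, once for the base case) rather than saving work, and as sketched the base case is incomplete. I would either replace the base-case sketch with the direct argument from the paper, or drop the clopen/connectedness scaffolding and just run that direct argument for every quadruple.
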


\begin{proof}
By Proposition \ref{prop: positivity basic}(2), it suffices to show that $(\xi(x),\xi(y),\xi(z),\xi(w))$ is positive for all quadruples $x,y,z,w\in\mathbb{S}^1$ such that $x<y<z<w<x$ along $\mathbb{S}^1$. Pick any such quadruple $x,y,z,w\in\mathbb{S}^1$, and let $I\subset\mathbb{S}^1$ denote the closed subinterval that contains $z$ with endpoints $y$ and $w$. By Proposition \ref{prop: positivity basic}(1), the map $\xi$ is transverse. Thus, for all $t\in I$, we may define the map
\[u:I\to\PGL_d(\Rb)\]
by setting $u(t)\in\PGL_d(\Rb)$ to be the unipotent element that fixes $\xi(x)$ and sends $\xi(w)$ to $\xi(t)$. The continuity of $\xi$ then implies that the map $u$ is continuous. 

Since $(\xi(x),\xi(y),\xi(w))$ is positive, there is a basis $\Bc=(e_1,\dots,e_d)$ such that $e_k\in\xi(x)^k\cap\xi(w)^{d-k+1}$ and $u(y)\in U_{>0}(\Bc)$. First, we prove that $u(z)\in U_{>0}(\Bc)$ as well. If this were not the case, then the continuity of $u$ implies that there is some $t_0\in (y,z]\subset I$ such that $u(t_0)\in U_{\geq0}(\Bc)-U_{>0}(\Bc)$. By Theorem \ref{thm: well-known}, $\xi(t_0)$ and $\xi(w)$ are not transverse, thus contradicting the fact that $\xi$ is a transverse map.

Next, we show that $u(z)^{-1}u(y)\in U_{>0}(\Bc)$ as well. To do so, let 
\[v:[z,w]\to\PGL_d(\Rb)\] 
be the continuous map defined by $v(t):=u(t)^{-1}u(y)$. Observe that $v(w)=u(y)\in U_{>0}(\Bc)$. Thus, if $u(z)^{-1}u(y)=v(z)\notin U_{>0}(\Bc)$, then there is some $t_0\in[z,w)$ such that $v(t_0)\in U_{\geq0}(\Bc)-U_{>0}(\Bc)$. By Theorem \ref{thm: well-known}, the pair of flags $\xi(w)$ and $v(t_0)\cdot\xi(w)$ are not transverse, which means that $\xi(t_0)=u(t_0)\cdot\xi(w)$ and $\xi(y)=u(t_0)v(t_0)\cdot\xi(w)$ are not transverse. This contradicts the fact that $\xi$ is a transverse map.

Since we have proven that both $u(z)$ and $u(z)^{-1}u(y)$ lie in $U_{>0}(\Bc)$, the quadruple of flags
\[\big(\xi(x),\xi(y),\xi(z),\xi(w)\big)=\big(\xi(x),u(z)u(z)^{-1}u(y)\cdot\xi(w),u(z)\cdot\xi(w),\xi(w)\big)\] 
is positive, so the lemma follows.
\end{proof}

\begin{proof}[Proof of Proposition \ref{thm 0}]
By Lemma \ref{lem: positive map}, it suffices to show that $(\xi(a),\xi(b),\xi(c))$ is positive for any pairwise distinct triple $a,b,c\in\mathbb{S}^1$. By Proposition \ref{prop: positivity basic}(1), we may assume that $a<b<c$ and $x<y<z$ by switching the roles of $a$ and $c$ and the roles of $x$ and $z$ if necessary. Then there are continuous maps 
\[f_1,f_2,f_3:[0,1]\to\mathbb{S}^1\] 
such that $(f_1(0),f_2(0),f_3(0))=(x,y,z)$, $(f_1(1),f_2(1),f_3(1))=(a,b,c)$, and $(f_1(t), f_2(t),f_3(t))$ are pairwise distinct triples for all $t$. 

Recall that $\Pc$ denotes the set of positive triples of flags in $\Fc(\Rb^d)$, and $\Tc$ denotes the set of pairwise transverse triples of flags in $\Fc(\Rb^d)$. Since $\xi$ is continuous and transverse, this implies that the map
\[F:[0,1]\to\Tc\]
given by $F(t)=\big(\xi(f_1(t)),\xi(f_2(t)),\xi(f_3(t))\big)$ is well-defined and continuous. Since $F(0)\in\Pc$ by hypothesis, Theorem \ref{thm: well-known} implies that $F(1)\in\Pc$. 
\end{proof}

\section{Proof of Theorems \ref{thm 1} and \ref{thm 2}}\label{sec 2}
Using Proposition \ref{thm 0}, we will now prove Theorems \ref{thm 1} and \ref{thm 2}. 

\begin{proof}[Proof of Theorem \ref{thm 1}] 
The \emph{$d$-th upper triangular Pascal matrix} $Q_d$ is the $d\times d$ upper triangular matrix whose $(i,j)$-th entry (with $i\leq j$) is the integer $\binom{j-1}{i-1}$. To prove the theorem, we will first recall some basic properties of $Q_d$.

\begin{lemma}\label{lem: Pascal}
$Q_d$ is totally positive, unipotent, and has a single Jordan block
\end{lemma}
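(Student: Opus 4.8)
The plan is to establish three separate properties of the $d$-th upper triangular Pascal matrix $Q_d$, whose $(i,j)$-entry is $\binom{j-1}{i-1}$.

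First, I would prove that $Q_d$ is totally positive. It is already upper triangular with $1$'s on the diagonal (since $\binom{i-1}{i-1}=1$), so I only need to check that the non-trivial minors are strictly positive. The cleanest approach is to recognize $Q_d$ as a submatrix of the infinite Pascal array, for which the minors are governed by the Lindstr\"om--Gessel--Viennot lemma: the binomial coefficient $\binom{j-1}{i-1}$ counts lattice paths, and any minor of the Pascal matrix equals the number of families of non-intersecting lattice paths between the corresponding sources and sinks. For the upper-triangular Pascal matrix this count is strictly positive precisely for the non-trivial minors (those not forced to vanish by upper-triangularity), which is exactly the claim. Alternatively, and perhaps more in keeping with the ``elementary and self-contained'' spirit of the paper, I would factor $Q_d$ as a product of elementary bidiagonal totally nonnegative matrices and invoke the standard product formula for minors (the Cauchy--Binet / Whitney factorization), but the LGV argument is the shortest rigorous route. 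I expect the bookkeeping of which minors are forced to vanish to be the only fiddly point here.

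Second, $Q_d$ is unipotent: its only eigenvalue is $1$ since it is upper triangular with unit diagonal, so $Q_d - I$ is nilpotent and $Q_d$ is unipotent by definition.

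Third, and this is the substantive point, I would show $Q_d$ has a single Jordan block, equivalently that $\operatorname{rank}(Q_d - I) = d-1$, equivalently that $Q_d - I$ has a $1$-dimensional kernel. The slick way is to identify $Q_d$ with the action on polynomials of degree $\leq d-1$: if $e_i$ corresponds to the monomial basis (suitably indexed), then $Q_d$ represents the shift operator $p(x)\mapsto p(x+1)$, since $\binom{j-1}{i-1}$ are exactly the coefficients in the binomial expansion of $(x+1)^{j-1}$. Then $Q_d - I$ represents the forward difference operator $\Delta: p(x) \mapsto p(x+1)-p(x)$, which lowers degree by exactly one on nonconstant polynomials and kills exactly the constants; hence $\Delta$ is nilpotent of index $d$ on this $d$-dimensional space, so $Q_d - I$ is a nilpotent matrix of maximal nilpotency index, which forces a single Jordan block. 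I would phrase this either via the shift-operator interpretation or, if one prefers to stay purely matrix-theoretic, by directly computing that $(Q_d - I)^{d-1}$ has a nonzero entry (the top-right entry, coming from iterated finite differences of $x^{d-1}$) while $(Q_d-I)^d = 0$.

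The main obstacle is the total positivity claim: verifying strict positivity of all non-trivial minors requires a genuine combinatorial or factorization input rather than a one-line observation, so I would devote most of the proof to that part, and dispatch unipotence and the single-Jordan-block property quickly using the polynomial/shift-operator model.
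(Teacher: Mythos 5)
Your proposal is correct, but it proves the substantive part of the lemma by a genuinely different route from the paper. For total positivity, the paper does not argue combinatorially at all: it observes that $Q_d$ is the image of $\left(\begin{smallmatrix}1&1\\0&1\end{smallmatrix}\right)$ under the irreducible representation $\iota_d:\GL_2(\Rb)\to\GL(\mathrm{Sym}^{d-1}(\Rb^2))\cong\GL_d(\Rb)$, and then invokes Fock--Goncharov's result that $\iota_d$ carries totally positive upper-triangular unipotents of $\GL_2(\Rb)$ into $U_{>0}(\Bc)$. Your Lindstr\"om--Gessel--Viennot (or bidiagonal-factorization) argument is more elementary and self-contained, at the cost of the bookkeeping you flag (identifying the non-trivial minors $\det(u_{I,J})$, i.e.\ those with $i_p\leq j_p$ for all $p$, with exactly the source--sink configurations admitting a non-intersecting path family); the paper's argument is shorter given the cited input and has the side benefit of exhibiting $Q_d$ as the principal unipotent of the principal $\SL_2$, which is conceptually aligned with the Hitchin-representation context. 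For the single-Jordan-block claim the paper merely says one verifies $Q_d$ has a unique eigenvector, whereas your identification of $Q_d$ with the shift $p(x)\mapsto p(x+1)$ on polynomials of degree at most $d-1$ (so that $Q_d-I$ is the forward difference operator, with one-dimensional kernel and nilpotency index exactly $d$) is in fact the dehomogenized form of the paper's own $\mathrm{Sym}^{d-1}$ model, and it simultaneously gives unipotence; this part of your argument is complete and arguably cleaner than the paper's. No gaps.
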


\begin{proof}
The claim that $Q_d$ is unipotent is obvious, and the claim that $Q_d$ has a single Jordan block is a straightforward calculation: one simply verifies that $Q_d$ has a unique eigenvector. 

To prove that $Q_d$ is totally positive, observe that the natural $\GL_2(\Rb)$ action on the symmetric tensor ${\rm Sym}^{d-1}(\Rb^2)$ given by 
\[g(v_1\odot\dots\odot v_{d-1}):=g(v_1)\odot\dots\odot g(v_{d-1})\] 
induces a representation 
\[\iota_d:\GL_2(\Rb)\to\GL(\mathrm{Sym}^{d-1}(\Rb^2))\cong\GL_d(\Rb).\] 
Here, the identification $\GL(\mathrm{Sym}^{d-1}(\Rb^2))\cong\GL_d(\Rb)$ is induced by the linear identification 
\[{\rm Sym}^{d-1}(\Rb^2)\cong\Rb^d\]
given by identifying the standard basis $(e_1,\dots,e_d)$ of $\Rb^d$ with the basis $(e_1^{d-1},e_1^{d-2}e_2,\dots,e_1e_2^{d-2},e_2^{d-1})$ of ${\rm Sym}^{d-1}(\Rb^2)$ induced by the standard basis $(e_1,e_2)$ of $\Rb^2$. Note that the representation $\iota_d$ descends to a representation, also denoted
\[\iota_d:\PGL_2(\Rb)\to\PGL_d(\Rb).\]

If we take $\Bc$ to be the standard basis of $\Rb^d$, then by \cite[Proposition 5.7]{FG}, 
\[\iota_d(U_{>0}(e_1,e_2))\subset U_{>0}(\Bc).\]
It is also straightforward to verify that $[Q_d]=\iota_d\left(\begin{bmatrix}
1&1\\
0&1
\end{bmatrix}\right)$
and that
$\begin{bmatrix}
1&1\\
0&1
\end{bmatrix}$ clearly lies in $U_{>0}(e_1,e_2)$. Thus, $[Q_d]\in U_{>0}(\Bc)$, so $Q_d$ is totally positive.
\end{proof}

The proof of this theorem relies on the following lemma, which demonstrates the inherent positive nature of a unipotent element in $\PGL_d(\Rb)$ with a single Jordan block.

\begin{lemma}\label{lem: unipotent}
Let $u\in\PGL_d(\Rb)$ be a unipotent element with a single Jordan block, and let $F$ be the fixed flag of $u$. Then for any flag $G$ that is transverse to $F$ and for any sufficiently large $t$, the triple $(F,u^t\cdot G,G)$ is positive.
\end{lemma}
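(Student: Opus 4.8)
\textbf{Proof proposal for Lemma \ref{lem: unipotent}.}

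The plan is to reduce to the concrete model provided by the Pascal matrix $Q_d$ from Lemma \ref{lem: Pascal}. Since $u$ is unipotent with a single Jordan block, up to conjugation in $\PGL_d(\Rb)$ we may assume $u = [Q_d]$, and then $F$ is the standard complete flag $E = (\langle e_1\rangle, \langle e_1, e_2\rangle, \dots)$ fixed by the upper triangular matrix $Q_d$. By Proposition \ref{prop: positivity basic}(1), positivity is $\PGL_d(\Rb)$-invariant, so it suffices to prove the statement in this normalized setting. Next, I would like to further normalize $G$. The group $Z := Z_{\PGL_d(\Rb)}(u)$ of elements commuting with $u$ acts on the set of flags transverse to $F$; I would check that this action is transitive (for a regular unipotent, the centralizer is the one-parameter unipotent subgroup it generates together, in $\PGL$, with scalars — but one needs the full centralizer in $\GL_d$, which is the $d$-dimensional algebra $\Rb[N]$ of polynomials in the nilpotent $N = Q_d - I$; its invertible elements act transitively on flags opposite to $E$). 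Granting transitivity, and since elements of $Z$ commute with $u$ and preserve positivity, it is enough to prove the lemma for a single convenient choice of $G$ transverse to $F$ — the natural candidate being the ``opposite'' standard flag $E^- = (\langle e_d\rangle, \langle e_d, e_{d-1}\rangle, \dots)$, with standard dual basis.

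With $F = E$ and $G = E^-$ fixed, the triple $(F, u^t\cdot G, G) = (E, Q_d^t\cdot E^-, E^-)$ must be shown to be positive for all large $t$. By the definition of positivity of a triple, I need a basis $\Bc$ adapted to the transverse pair $(E, E^-)$ — here simply $\Bc = (e_1, \dots, e_d)$, or its appropriate reindexing so that $e_i \in E^i \cap (E^-)^{d-i+1}$ — such that the unipotent $v \in U(\Bc)$ fixing $E$ with $v\cdot E^- = Q_d^t \cdot E^-$ lies in $U_{>0}(\Bc)$. But the unipotent fixing $E$ and moving $E^-$ is essentially unique, so $v$ is conjugate/equal to $Q_d^t$ read in the basis $\Bc$, i.e. I must show that $Q_d^t$ (or the matrix obtained by the relevant reindexing of basis vectors that reverses $E^-$) is totally positive for $t$ large. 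By Lemma \ref{lem: Pascal}, $Q_d$ itself is totally positive, and since $U_{>0}(\Bc)$ is a semigroup, $Q_d^t \in U_{>0}(\Bc)$ for every $t \geq 1$ — so in this symmetric model ``sufficiently large $t$'' can be taken to be $t \geq 1$, and the minor-positivity is immediate. The only subtlety is the basis reindexing needed to align with the ordered-transverse-pair convention $(E, E^-)$ versus $(E^-, E)$; one checks that conjugating $Q_d$ by the anti-diagonal permutation again yields a lower-triangular totally positive matrix, or equivalently re-runs the $\iota_d$ argument of Lemma \ref{lem: Pascal} with the opposite Borel.

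The step I expect to be the main obstacle is \emph{not} the positivity computation — that is handed to us by Lemma \ref{lem: Pascal} and the semigroup property — but rather the normalization of $G$: verifying that the centralizer of a regular unipotent acts transitively on the set of transverse flags, so that fixing $G = E^-$ loses no generality. If that transitivity is awkward to invoke cleanly, the fallback is to argue directly: for an arbitrary transverse $G$, write the unipotent $w_t$ fixing $E$ with $w_t\cdot G$ playing the role of $Q_d^t$-translate, express $w_t$ in a basis adapted to $(E,G)$, and show its nontrivial minors are dominated by the top-degree-in-$t$ term, which is a positive multiple of the corresponding minor of $Q_d^t$; hence all minors are positive once $t$ is large. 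This ``leading-order in $t$'' estimate is where some genuine (though routine) computation with the powers $Q_d^t$ — whose entries are polynomials in $t$ with positive leading coefficients — would be needed, and it is the reason the lemma only asserts positivity for \emph{sufficiently large} $t$ rather than all $t \geq 1$.
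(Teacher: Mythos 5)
Your proposal has a genuine gap in the primary route, though the fallback you mention is essentially the paper's argument.

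The transitivity claim is false. The centralizer of a regular unipotent $u$ in $\GL_d(\Rb)$ is the invertible part of the $d$-dimensional commutative algebra $\Rb[N]$, so its image in $\PGL_d(\Rb)$ has dimension $d-1$. On the other hand, the space of flags transverse to $F$ is an open dense subset of $\Fc(\Rb^d)$, of dimension $d(d-1)/2$. For $d\geq3$ these do not match, so the $Z$-orbit of $E^-$ is a proper submanifold of the transverse flags and you cannot normalize an arbitrary $G$ to $E^-$. (You could also see this from the conclusion: if every transverse $G$ were in the $Z$-orbit of $E^-$, the lemma would hold with ``$t\geq1$'' rather than ``sufficiently large $t$'', and you yourself observe that the large-$t$ requirement must be doing real work.)

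Your fallback is the correct argument and is exactly what the paper does. Fix the basis $\Bc$ in which $u$ is the Pascal matrix $Q_d$ and let $H$ be the standard opposite flag. Take the unipotent $v$ fixing $F$ with $v\cdot H = G$; conjugating the triple by $v^{-1}$ reduces the problem to showing $v^{-1}u^tv\in U_{>0}(\Bc)$. Then one shows more generally that for any unipotents $v',v$ fixing $F$, the matrix $v'u^tv$ is totally positive for large $t$: its $(i,j)$-entry is a polynomial in $t$ with leading term $\binom{j-1}{i-1}t^{j-i}$, which is the $(i,j)$-entry of $Q_d^t$; since all terms of a given minor share the same top $t$-degree, the leading coefficient of each nontrivial minor of $v'u^tv$ equals the corresponding minor of $Q_d^t$, which is positive by Lemma \ref{lem: Pascal}. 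So each minor is eventually positive. In short: drop the centralizer normalization entirely and promote your ``fallback'' leading-order argument to the main proof; the rest of your setup (reduction to $u=[Q_d]$ and invariance of positivity under $\PGL_d$) is fine.
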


\begin{proof}
By Lemma \ref{lem: Pascal}, $Q_d$ is a unipotent upper triangular matrix with a single Jordan block, so we may choose a basis $\Bc=(f_1,\dots,f_d)$ of $\Rb^d$ such that $u$ is represented in $\Bc$ by $Q_d$. Then $u^t$ is represented in $\Bc$ by the matrix $Q_d^t$, which is upper triangular, and whose $(i,j)$-th entry (with $i\leq j$) is $\binom{j-1}{i-1}t^{j-i}$. Furthermore, for all $k\in\{1,\dots,d-1\}$, the subspace $F^k\subset\Rb^d$ is spanned by $\{f_1,\dots,f_k\}$. 

Let $H\in\Fc(\Rb^d)$ be the flag such that for all $k\in\{1,\dots,d-1\}$, the subspace $H^k\subset\Rb^d$ is spanned by $\{f_{d-k+1},\dots,f_d\}$. Since $G$ is transverse to $F$, there is some unipotent $v\in\PGL_d(\Rb)$ that fixes $F$ and sends $H$ to $G$. It is now sufficient to verify that $v^{-1}u^tv\in U_{>0}(\Bc)$ for sufficiently large $t$. Indeed, if this were the case, then the observation that
\[v^{-1}\cdot(F,u^t\cdot G,G)=(F,v^{-1}u^tv\cdot H,H)\]
implies that $(F,u^t\cdot G,G)$ is positive for sufficiently large $t$.

In fact, we will show that if $v'$ and $v$ are two unipotent elements that fix $F$, then $v'u^tv\in U_{>0}(\Bc)$ for sufficiently large $t$. Observe that since $v'$ and $v$ are represented in the basis $\Bc$ by upper triangular matrices whose diagonal entries are all $1$, the product $v'u^tv$ is also represented in the basis $\Bc$ by an upper triangular matrix $M_t$ whose diagonal entries are all $1$. Furthermore, for each $i<j$, the $(i,j)$-th entry of $M_t$ is a polynomial in the variable $t$ whose leading term is $\binom{j-1}{i-1}t^{j-i}$, which is the $(i,j)$-th entry of $Q_d^t$. By Lemma \ref{lem: Pascal}, $Q_d^t$ is totally positive, so the leading term of any minor of $M_t$ is the corresponding minor of $Q_d^t$. Hence, for sufficiently large $t$, we have $v'u^tv\in U_{>0}(\Bc)$. 
\end{proof}

Let $\gamma\in\Gamma$ be the element such that $\rho(\gamma)$ is unipotent with a single Jordan block. Since $\rho$ is Borel transverse, the strongly dynamics preserving property of its limit map $\xi:\Lambda(\Gamma)\to\Fc(\Rb^d)$ ensures that $\gamma$ is parabolic. Let $x\in\Lambda(\Gamma)$ be the unique fixed point of $\gamma$ and let $y\in\Lambda(\Gamma)-\{x\}$. Then $\xi(x)$ is the fixed flag of $\rho(\gamma)$. By Lemma \ref{lem: unipotent} and the $\rho$-equivariance and transversality of $\xi$, the triple of flags $\big(\xi(x),\xi(\gamma^ny),\xi(y)\big)$ is positive for sufficiently large $n$. Proposition \ref{thm 0} then implies that $\xi$ is a positive map, so $\rho$ is a Hitchin representation.
\end{proof}

\begin{proof}[Proof of Theorem \ref{thm 2}]

Let $\Lambda(\Gamma')\subset\Lambda(\Gamma)$ be the limit set of $\Gamma'$, and let $x,y,z$ be pairwise distinct points in $\Lambda(\Gamma')$ (this exists because $\Gamma'$ is non-elementary). Since $\rho$ is Borel transverse with limit map $\xi:\Lambda(\Gamma)\to\Fc(\Rb^d)$, note that $\rho|_{\Gamma'}$ is also Borel transverse with limit map $\xi|_{\Lambda(\Gamma')}:\Lambda(\Gamma')\to\Fc(\Rb^d)$. Since $\rho|_{\Gamma'}$ is Hitchin, the map $\xi|_{\Lambda(\Gamma')}$ is a positive map. Therefore, $\big(\xi(x),\xi(y),\xi(z)\big)$ is a positive triple, so Proposition \ref{thm 0} implies that $\xi$ is a positive map. As such, $\rho$ is a Hitchin representation.
\end{proof}

\appendix

\section{Proof of Theorem \ref{thm: well-known}}\label{app}
In this proof, we fix the basis $\Bc$, and hence may view every $u\in U_{\geq0}(\Bc)$ as a unipotent upper triangular $d \times d$ matrix. Given (strictly) increasing tuples 
\[I = (i_1, \dotsc, i_k)\quad\text{and}\quad J = (j_1, \dotsc, j_\ell)\] 
of integers (weakly) between $1$ and $d$, we denote by $u_{I,J}$ the submatrix of $u$ corresponding to the $I$ rows and $J$ columns. We say that $I$ is \emph{consecutive} if $i_p = i_1 + p -1$ for each $p \in \{1, \dotsc, k\}$. If $k>1$, we also denote $I':=(i_1,\dots,i_{k-1})$ and $I'':=(i_2,\dots,i_k)$.

\begin{lemma}\label{lem:consecutive}
Let $u\in U_{\geq0}(\Bc)$ and let $k\in\{1,\dots,d\}$. Suppose that all the non-trivial $\ell\times\ell$-minors of $u$ are positive for all $\ell<k$. If all the non-trivial $k\times k$ minors $\det(u_{I,J})$ of $u$ with consecutive $I$ and consecutive $J$ are positive, then all the non-trivial $k\times k$ minors of $u$ are positive.
\end{lemma}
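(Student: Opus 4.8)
The plan is to reduce an arbitrary non-trivial $k\times k$ minor $\det(u_{I,J})$ to one with consecutive index tuples by "sweeping" the row and column indices together one at a time, using a short exact-sequence / Plücker-type identity at each step to control the sign. First I would set up the relevant minor identity. Recall that for a matrix $u$, fixing all but one of the rows (say we single out a "gap" in the row index set $I$), there is a three-term determinant identity — a consequence of the Grassmann–Plücker relations, or equivalently of Dodgson-type condensation — expressing a minor whose row set has a gap at position $r$ as a combination of minors obtained by (a) deleting $r$ and adjoining the index just below the gap, and (b) minors with smaller support. Concretely, the key classical fact I would invoke is the \emph{Lindström–Gessel–Viennot} interpretation, or more elementarily the identity that for $a<b$ with $b>a+1$,
\[
\det(u_{I,J})\cdot(\text{some positive minor of smaller size}) = (\text{product of two minors each "closer to consecutive"}) \pm (\text{products of strictly smaller minors}),
\]
where the smaller minors are all non-trivial and hence, by the inductive hypothesis of the lemma ($\ell<k$ minors all positive), strictly positive, and where the sign on the correction term works out so that positivity propagates. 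The cleanest formulation: if the row tuple $I$ has its first gap, so $I=(i_1,\dots,i_p,i_p+2,\dots)$ say — I would instead induct on the total "defect" $\delta(I,J)=(i_k-i_1-(k-1))+(j_\ell-j_1-(k-1))$, the amount by which $I$ and $J$ fail to be consecutive.

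The main steps, in order: (1) Establish the base case $\delta=0$: both $I$ and $J$ consecutive, which is the hypothesis. (2) For the inductive step with $\delta(I,J)>0$, assume without loss of generality that $I$ is non-consecutive (the column case is symmetric, or handled by transposing the identity). Let $r$ be the largest index with $i_r + 1 < i_{r+1}$, i.e. locate a gap. Apply the Plücker/condensation identity that relates $u_{I,J}$ to minors whose row sets are $I$ with $i_r$ shifted up to $i_r+1$ (strictly smaller defect) and $I$ with $i_{r+1}$ shifted down, all against the column set $J$, with a "pivot" minor of size $k-1$ appearing as a denominator. (3) Check that \emph{every} minor appearing in the identity other than $u_{I,J}$ itself is non-trivial: this is where the upper-triangular structure of $u\in U_{\geq 0}(\Bc)$ is used — a minor $\det(u_{A,B})$ is forced to be zero only if some $a_m > b_m$, and I need to verify that the row/column shifts preserve the inequalities $a_m \leq b_m$ that make $u_{I,J}$ non-trivial in the first place. (4) By the inductive hypothesis on $\delta$, the shifted $k\times k$ minors are positive; by the lemma's hypothesis on smaller minors, the $(k-1)\times(k-1)$ pivot and any correction minors are positive; solving the identity for $\det(u_{I,J})$ and tracking the sign gives $\det(u_{I,J})>0$.

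The hard part will be step (3) combined with getting the signs right in step (4): I need the precise form of the three-term identity so that the "correction" term enters with a sign making $\det(u_{I,J})$ a \emph{positive} combination (or a quotient of positive quantities) of the already-known-positive minors, rather than a difference that could in principle vanish or be negative. The natural tool is the Desnanot–Jacobi (Dodgson condensation) identity, which has exactly the shape $\det(M)\det(M^{\hat 1\hat n}_{\hat 1 \hat n}) = \det(M^{\hat 1}_{\hat 1})\det(M^{\hat n}_{\hat n}) - \det(M^{\hat 1}_{\hat n})\det(M^{\hat n}_{\hat 1})$; applied to a carefully chosen $(k+1)\times(k+1)$ or $k\times k$ submatrix straddling the gap, it expresses a "more consecutive" minor as a sum of two products, letting one induct. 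One must ensure the submatrix chosen to condense is itself non-trivial and that all four resulting minors are non-trivial — again a bookkeeping check using upper-triangularity — after which positivity of $\det(u_{I,J})$ is immediate since it appears as a ratio of products of positive terms. I would organize the induction so that a single application of Desnanot–Jacobi strictly decreases $\delta$, and iterate.
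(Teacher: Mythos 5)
Your overall skeleton -- reduce to one non-consecutive index set at a time and induct on a dispersion/defect statistic, using determinant identities at each step -- matches the paper's strategy (the paper splits into a rows-claim and a columns-claim and inducts on the column spread $m=j_k-j_1+1$). But the proposal has a genuine gap exactly at the point you yourself flag as ``the hard part.'' Every identity in the family you propose (short Pl\"ucker relation, Desnanot--Jacobi/Dodgson condensation, Sylvester) has the shape
\[
\det(u_{I,J})\cdot P_5 \;=\; P_1P_2-P_3P_4,
\]
and knowing that $P_1,\dots,P_5$ are all strictly positive tells you nothing about the sign of $P_1P_2-P_3P_4$. So ``solving the identity for $\det(u_{I,J})$ and tracking the sign'' is not a bookkeeping step; it is the entire content of the lemma, and no choice of which gap to condense across will make the correction term disappear in general. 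Your step (4) as written (``all four resulting minors are non-trivial \dots after which positivity is immediate'') is simply false.

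The paper closes this gap by using a hypothesis your proposal never invokes: $u\in U_{\geq0}(\Bc)$, so \emph{every} non-trivial minor of $u$ is already $\geq0$, and one only needs to rule out $\det(u_{I,J})=0$. Assuming vanishing, the columns $u_{I,j_1},\dots,u_{I,j_k}$ satisfy a dependence $\sum_i c_i\,u_{I,j_i}=\vec 0$; plugging this dependence into two different determinants (one $k\times k$ determinant obtained by inserting a column $n$ from the gap of $J$, and one $(k-1)\times(k-1)$ determinant on the rows $I'$) yields $(-1)^k c_k/c_1\geq0$ and $(-1)^{k-1}c_k/c_1>0$ simultaneously, a contradiction. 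Here the inequality $\geq 0$ in the first computation comes precisely from the global nonnegativity of minors of $u$, and the strict positivity in the second from the lemma's hypothesis on $(k-1)\times(k-1)$ minors. If you want to salvage your route, you must either (a) likewise reduce to non-vanishing via $u\in U_{\geq0}(\Bc)$ and then explain why $P_1P_2\neq P_3P_4$ (which the identity alone does not give), or (b) replace the condensation identity with the kernel-vector sign argument -- at which point you have reproduced the paper's proof.
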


\begin{proof}
Notice that it suffices to prove the following pair of claims (assuming that all the non-trivial $\ell\times\ell$-minors of $u$ are positive for all $\ell<k$):
\begin{enumerate}
\item Fix $I$ of length $k$. If all the non-trivial $k\times k$ minors of $u$ of the form $\det(u_{I,J})$ with consecutive $J$ are positive, then all the non-trivial $k\times k$ minors of $u$ of the form $\det(u_{I,J})$ are positive.
\item Fix $J$ of length $k$. If all the non-trivial $k\times k$ minors of $u$ of the form $\det(u_{I,J})$ with consecutive $I$ are positive, then all the non-trivial $k\times k$ minors of $u$ of the form $\det(u_{I,J})$ are positive.
\end{enumerate}
Indeed, if all the non-trivial $k\times k$ minors $\det(u_{I,J})$ of $u$ with consecutive $I$ and consecutive $J$ are positive, then we may apply Claim (1) to deduce that all the non-trivial $k\times k$ minors $\det(u_{I,J})$ of $u$ with consecutive $I$ are positive. Applying Claim (2) now gives the desired conclusion. 

We only prove Claim (1); the proof of Claim (2) is the same, except that the roles of $I$ and $J$ are switched.

When $k=1$, Claim (1) is obvious because every tuple of length $1$ is consecutive. We may thus assume that $k\in\{2,\dots,d\}$. Denote $J=(j_1,\dots,j_k)$, and notice that 
\[m:=j_k-j_1+1\in\{k,\dots,d\}.\] 
We will proceed by induction on $m$.

In the base case when $m=k$, $J$ is consecutive, so $\det(u_{I,J})$ is positive by assumption. 

For the inductive step, fix $m\in\{k+1,\dots,d\}$. Since $k < m$, $J$ is not consecutive, so there exist $q \in \{ 1, \dotsc, k-1\}$ and an integer $n$ such that $j_{q} < n < j_{q+1}$. Suppose for the purpose of contradiction that $\det(u_{I,J})=0$. Then we may write
\begin{align}\label{eqn: appendix A0} c_1 u_{I,j_1} + \cdots +  c_k u_{I,j_k} = \vec{0} \end{align}
for some $c_1,\dots,c_k \in \mathbb{R}$ that are not all zero. Thus, 
\begin{eqnarray}\label{eqn: appendix A1}
0 & = & \det( \vec{0}, u_{I,j_2}, \dotsc, u_{I,j_q}, u_{I,n}, u_{I,j_{q+1}}, \dotsc, u_{I,j_{k-1}} )\nonumber \\ 
  & = & \det( c_1 u_{I,j_1} + \cdots +  c_k u_{I,j_k}, u_{I,j_2}, \dotsc, u_{I,j_q}, u_{I,n}, u_{I,j_{q+1}}, \dotsc, u_{I,j_{k-1}} )\\
  & = & c_1 \det( u_{I,(j_1,j_2,\dots,j_q,n,j_{q+1},\dots,j_{k-1})}) + (-1)^{k-1} c_k \det(u_{I,(j_2,\dots,j_q,n,j_{q+1},\dots,j_{k-1},j_k)}). \nonumber 
\end{eqnarray}

Since $\det(u_{I,J})$ is a non-trivial $k\times k$ minor of $u$, i.e. $i_p \leq j_p$ for all $p \in \{1, \dotsc, k\}$, both $\det(u_{I',J'})$ and $\det(u_{I',J''})$ are non-trivial $(k-1)\times (k-1)$ minors of $u$, so they are both positive by assumption. So, \eqref{eqn: appendix A0} implies that $c_1\neq 0\neq c_k$. At the same time, notice that $j_k-j_2+1<m$. Since $\det(u_{I,J})$ is a non-trivial $k\times k$-minor of $u$, the same is true for $\det(u_{I,(j_2,\dots,j_q,n,j_{q+1},\dots,j_k)})$, so it is positive by the inductive hypothesis. It now follows from \eqref{eqn: appendix A1} that 
\[(-1)^k \frac{c_k}{c_1}=\frac{\det( u_{I,(j_1,\dots,j_q,n,j_{q+1},\dots,j_{k-1})})}{\det(u_{I,(j_2,\dots,j_q,n,j_{q+1},\dots,j_k)})} \geq 0.\] 

On the other hand, we also have
\begin{eqnarray}
0 & = & \det( \vec{0}, u_{I',j_2}, \dotsc, u_{I',j_{k-1}} )\nonumber \\ 
  & = & \det( c_1 u_{I',j_1} + \cdots +  c_k u_{I',j_k}, u_{I',j_2}, \dotsc, u_{I',j_{k-1}} ) \nonumber\\
  & = & c_1 \det( u_{I',J'} ) + (-1)^{k-2} c_k \det(u_{I',J''}), \nonumber 
\end{eqnarray}
so
\[(-1)^{k-1} \frac{c_k}{c_1}=\frac{\det(u_{I',J'})}{\det(u_{I',J''})} > 0\]
because $c_1\neq 0\neq c_k$ and both $\det(u_{I',J'})$ and $\det(u_{I',J''})$ are positive.  
We thus arrive at a contradiction, so $\det(u_{I,J})\neq 0$. Since $u\in U_{\geq0}(\Bc)$, it follows that $\det(u_{I,J})>0$. This completes the inductive step.
\end{proof}

\begin{lemma}\label{lem:righttopcorner}
Let $u\in U_{\geq0}(\Bc)$. Suppose that there exists $k \in \{1, \dotsc, d\}$ such that 
\begin{itemize}
\item all the non-trivial $\ell\times\ell$-minors of $u$ are positive for all $\ell<k$;
\item there is a non-trivial $k \times k$ minor $\det(u_{I,J})$ of $u$ such that $I$ and $J$ are consecutive and $\det(u_{I,J}) = 0$. 
\end{itemize}
Then $\det(u_{I_0,J_0})=0$, where $I_0 = (1, \dotsc, k)$ and $J_0 = (d-k+1, \dotsc, d)$.
\end{lemma}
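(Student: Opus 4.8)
The plan is to move the given vanishing consecutive minor $\det(u_{I,J})$ to the top-right corner in single steps. I would start from the basic observation that if $I=(i_1,\dots,i_k)$ and $J=(j_1,\dots,j_k)$ are consecutive with $\det(u_{I,J})=0$, then necessarily $i_1<j_1$: otherwise non-triviality ($i_p\leq j_p$) and consecutiveness force $i_p=j_p$ for all $p$, so $u_{I,J}$ is unipotent upper triangular and $\det(u_{I,J})=1$. With this in hand, I would establish two one-step shift lemmas and iterate: a \emph{column-shift} (when $j_k<d$, replace $J$ by $J^+:=(j_1+1,\dots,j_k+1)$) and a \emph{row-shift} (when $i_1>1$, replace $I$ by $I^-:=(i_1-1,\dots,i_k-1)$), each preserving the property ``consecutive, $i_1<j_1$, determinant $0$''. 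Iterated, $I$ reaches $(1,\dots,k)=I_0$ and $J$ reaches $(d-k+1,\dots,d)=J_0$.

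For the column-shift I would border $u_{I,J}$ by the consecutive $(k+1)\times(k+1)$ submatrix $B=u_{P,Q}$ with $P=(i_1,\dots,i_k,i_k+1)$ and $Q=(j_1,\dots,j_k,j_k+1)$. The Desnanot--Jacobi (Dodgson condensation) identity then reads
\[
\det(B)\cdot\det\big(u_{(i_2,\dots,i_k),(j_2,\dots,j_k)}\big)=\det\big(u_{(i_2,\dots,i_k,i_k+1),J^+}\big)\cdot\det(u_{I,J})-\det(u_{I,J^+})\cdot\det\big(u_{(i_2,\dots,i_k,i_k+1),J}\big),
\]
using that $(j_2,\dots,j_k,j_k+1)=J^+$. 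The first product on the right vanishes. On the left, $\det(B)\geq0$ because $u\in U_{\geq0}(\Bc)$, and $\det(u_{(i_2,\dots,i_k),(j_2,\dots,j_k)})>0$ because it is a non-trivial $(k-1)\times(k-1)$ minor. The two surviving minors on the right are non-trivial — this is where $i_1<j_1$ (hence $i_p<j_p$ for all $p$) is used — so they are $\geq0$. Thus a product of two non-negative minors equals a non-positive quantity, forcing both to vanish (and $\det(B)=0$): either $\det(u_{I,J^+})=0$, the desired shift, or $\det(u_{(i_1+1,\dots,i_k+1),J})=0$, a ``drifted'' minor with the row window pushed one step \emph{down}. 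The row-shift lemma is symmetric (border below and to the left), with dichotomy: $\det(u_{I^-,J})=0$, or $\det(u_{I,(j_1-1,\dots,j_k-1)})=0$, the column window drifted one step \emph{left}.

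The main obstacle is precisely this dichotomy: each attempted step toward the corner may instead drift the other window away, so a naïve iteration can ping-pong between the two shift directions. To close the argument I would combine an extremal choice with the drift constraint. The key refinement is that a drift can occur only when $j_1-i_1\geq2$ (if $j_1=i_1+1$ the drifted minor would be unipotent with determinant $1$), so minors ``one step off the diagonal'' shift freely. I would then either (i) take a vanishing consecutive $k$-minor whose position is lexicographically extremal (say minimal $i_1$, then maximal $j_1$) and run the shift lemmas from it, using the drift bound to rule out escape and force $I_0,J_0$; or (ii) induct on the ambient dimension $d$, passing to the top-left $(d-1)\times(d-1)$ block when $j_k<d$ and to the bottom-right $(d-1)\times(d-1)$ block when $i_1>1$, so that only a single corner-adjacent step remains, which the drift-free case handles. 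In both routes the delicate point — and where I expect to spend the most effort — is certifying that the drift does not loop back to a configuration already produced; handling this cleanly may require an additional combinatorial lemma extracting $\det(u_{I_0,J_0})=0$ from the vanishing of the three consecutive $k$-minors surrounding the corner together with the vanishing $(k+1)$-minor $\det(B)$ obtained along the way.
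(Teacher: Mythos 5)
Your one-step shift lemmas are correct as far as they go: consecutiveness plus $\det(u_{I,J})=0$ does force $i_1<j_1$ (hence $i_p<j_p$ for all $p$), the Desnanot--Jacobi identity on $B=u_{(i_1,\dots,i_k,i_k+1),(j_1,\dots,j_k,j_k+1)}$ collapses to $\det(B)\det(u_{I'',J''})=-\det(u_{I,J^+})\det(u_{I^+,J})$ once $\det(u_{I,J})=0$ is inserted (here $I^+:=(i_1+1,\dots,i_k+1)$), and since the left side is $\geq 0$ while the right side is minus a product of non-negative non-trivial minors, at least one of $\det(u_{I,J^+})$, $\det(u_{I^+,J})$ vanishes. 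But the gap you flag at the end is genuine and is the whole difficulty: neither of your proposed routes closes the ping-pong. For route (i), every natural extremal choice kills only one branch of one dichotomy. For example, if you take a vanishing state with $j_1-i_1$ maximal, then the ``progress'' branch of each shift would produce a state with strictly larger gap, so both shifts are \emph{forced} into their drift branches --- the extremal state is exactly the one from which you learn nothing useful. Taking minimal $i_1$ and then minimal $j_1$ does pin $i_1=1$, but from $(I_0,J)$ the column-shift may still drift to $((2,\dots,k+1),J)$, which contradicts no extremality. Route (ii) has the same residual problem: after inducting on the top-left $(d-1)\times(d-1)$ block you still need one column-shift from $(1,\dots,k),(d-k,\dots,d-1)$ to the corner, and its drift branch $\det(u_{(2,\dots,k+1),(d-k,\dots,d-1)})=0$ is a legitimate non-trivial minor whenever $k\leq d-2$. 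As written, the proof does not go through.

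The paper avoids the dichotomy entirely by bordering asymmetrically: it adjoins row $j_k$ (not row $i_k+1$) and an arbitrary column $n>j_k$. Since $u$ is upper triangular, that row restricted to the columns $(j_1,\dots,j_k,n)$ is $(0,\dots,0,1,u_{j_k,n})$, so expanding the non-negative non-trivial $(k+1)$-minor $\det\big(u_{(i_1,\dots,i_k,j_k),(j_1,\dots,j_k,n)}\big)$ along it yields $0\leq -\det\big(u_{I,(j_1,\dots,j_{k-1},n)}\big)$, hence $\det\big(u_{I,(j_1,\dots,j_{k-1},n)}\big)=0$ for \emph{every} $n>j_k$, with no alternative branch. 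Combined with the linear independence of $u_{I,j_1},\dots,u_{I,j_{k-1}}$ (from $\det(u_{I',J'})>0$), this puts all columns $u_{I,n}$ with $n\in\{j_1,\dots,j_{k-1}\}\cup\{j_k,\dots,d\}\supset J_0$ into a fixed $(k-1)$-dimensional subspace, so $\det(u_{I,J_0})=0$ in one stroke; the transposed argument then moves $I$ to $I_0$. If you want to keep a stepwise scheme you would need a substitute for this ``no-drift'' identity; otherwise I recommend replacing the Dodgson condensation step with the paper's bordering.
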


\begin{proof}
Notice that it suffices to prove that $\det(u_{I,J_0})=0$ and that $\det(u_{I_0,J})=0$. We will only prove the former; the proof of the latter is the same.

Let $I = (i_1, \dotsc, i_k)$ and $J = (j_1, \dotsc, j_k)$. By assumption, $\det\left(u_{I',J'}\right) >0$, so $u_{I',j_1},\dots,u_{I',j_{k-1}}$ and hence $u_{I,j_1},\dots,u_{I,j_{k-1}}$ is a linearly independent collection of vectors. Since $\det\left(u_{I,J}\right) =0$, it follows that $u_{I,j_k}$ is a linear combination of $u_{I,j_1},\dots,u_{I,j_{k-1}}$.

Fix $n \in \{ j_k+1, \dotsc, d \}$. Since $\det(u_{I,J})$ is a non-trivial minor and $\det(u_{I,J})=0$, we have $i_k<j_k$. Then
\begin{eqnarray*}
0&\leq& \det\left(u_{(i_1,\dots,i_k,j_k),(j_1,\dots,j_k,n)}\right)\\
&=&\det \left(\begin{smallmatrix} u_{I,J'} & u_{I,j_{k}} & u_{I,n} \\  \vec{0} & 1 & u_{j_k,n} \end{smallmatrix} \right)\\
&=&u_{j_k,n}\det(u_{I,J})-\det\left(u_{I,(j_1,\dots,j_{k-1},n)}\right)\\
&=&-\det\left(u_{I,(j_1,\dots,j_{k-1},n)}\right)
\end{eqnarray*}
where the first inequality holds because $u\in U_{\geq0}(\Bc)$. At the same time, $\det\left(u_{I,(j_1,\dots,j_{k-1},n)}\right) \geq0$ because $u\in U_{\geq0}(\Bc)$, so $\det\left(u_{I,(j_1,\dots,j_{k-1},n)}\right) = 0$.  It follows that $u_{I,n}$ is a linear combination of the linearly independent collection of vectors $u_{I,j_1}, \dotsc, u_{I,j_{k-1}}$.

Since $J$ is consecutive, we have proven that the $k$ vectors $u_{I,d-k+1}, u_{I,d-k+2},\dots,u_{I,d}$ are all linear combinations of $u_{I,j_1}, \dotsc, u_{I,j_{k-1}}$. In particular, their span has dimension $k-1$, so $\det(u_{I,J_0})=0$. 
\end{proof}

\begin{proof}[Proof of Theorem \ref{thm: well-known}]
Since $u\in U_{\geq0}(\Bc)-U_{>0}(\Bc)$, there is some $k\in\{1,\dots,d-1\}$ such that some non-trivial $k\times k$ minor $\det(u_{I,J})$ of $u$ is zero, while all the non-trivial $\ell\times\ell$-minors of $u$ are positive for all $\ell<k$.

By Lemma \ref{lem:consecutive}, we may assume that both $I$ and $J$ are consecutive. Then Lemma \ref{lem:righttopcorner} implies $\det(u_{I_0,J_0}) = 0$ with $I_0 = (1, \dotsc, k)$ and $J_0 = (d-k+1, \dotsc, d)$. Therefore, the span of the vectors $u_{I_0,d-k+1},\dots,u_{I_0,d}$ has dimension at most $k-1$, so
\begin{eqnarray}
G^k + H^{d-k} &=& u \cdot {\rm Span}(e_d, \dotsc, e_{d-k+1}) + {\rm Span}(e_d, \dotsc, e_{k+1})\nonumber \\ 
                    &=&  {\rm Span}\left(u\cdot e_d, \dotsc, u\cdot e_{d-k+1},e_d, \dotsc, e_{k+1}\right) \nonumber \\ 
                    &=&  {\rm Span}\left( \left(\begin{smallmatrix} u_{I_0,d} \\ \vec{0} \end{smallmatrix}\right), \dotsc, \left(\begin{smallmatrix} u_{I_0,d-k+1} \\ \vec{0} \end{smallmatrix}\right),e_d, \dotsc, e_{k+1}\right)\nonumber  \\
                    &\neq& \mathbb{R}^d. \nonumber 
\end{eqnarray}
This implies that $G$ and $H$ are not transverse.
\end{proof}

\section{The Barbot examples}\label{app: Barbot}

Fix a lattice $\Gamma\subset\SL_2(\Rb)$ and some odd integer $d>2$. In this appendix, we define the Barbot examples, which are representations $\rho:\Gamma\to\PGL_d(\Rb)$ that are Borel transverse (or equivalently, cusped Borel Anosov), but not Hitchin. These are a straightforward generalization of examples (due to Barbot \cite{Barbot}) of Borel Anosov representations of a surface group into $\PGL_3(\Rb)$ that are not Hitchin.

 To define the Barbot examples, we need some preliminary results. First, let $(e_1,\dots,e_d)$ be the standard basis of $\Rb^d$, and equip $\Rb^d$ with the standard inner product. For any $g\in\PGL_d(\Rb)$, let 
\[\sigma_1(g)\geq\dots\geq\sigma_d(g) > 0 \]
denote the singular values of (any unit-determinant, linear representative of) $g$, and let 
\[A_g:={\rm diag}(\log\sigma_1(g),\dots,\log\sigma_d(g)).\]
By the singular value decomposition theorem, we may write every $g\in\PGL_d(\Rb)$ as the product
\[g=m\exp(A_g)\ell\]
for some $m,\ell\in\mathsf{PO}(d)$ (which are not necessarily unique). For every $g\in\PGL_d(\Rb)$, choose $m_g,\ell_g\in\mathsf{PO}(d)$ such that $g=m_g\exp(A_g)\ell_g$.

Let $F_0\in\Fc(\Rb^d)$ be the flag such that 
\[F_0^k={\rm Span}(e_1,\dots,e_k)\]
for all $k\in\{1,\dots,d-1\}$, and define
\[U(g):=m_g\cdot F_0.\]
One can verify that if $\sigma_k(g)>\sigma_{k+1}(g)$ for all $k\in\{1,\dots,d-1\}$, then $U(g)$ does not depend on the choice of $m_g$ and $\ell_g$, and hence is canonical to $g$. The following proposition is a standard linear algebra fact, see \cite[Appendix A]{CZZ23} for a proof.

\begin{proposition}\label{prop: singular values}
Let $\{g_n\}$ be a sequence in $\PGL_d(\Rb)$ and $F_+,F_-\in\Fc(\Rb^d)$. The following are equivalent:
\begin{enumerate}
\item $U(g_n)\to F_+$, $U(g_n^{-1})\to F_-$, and $\frac{\sigma_k(g_n)}{\sigma_{k+1}(g_n)}\to\infty$ for all $k\in\{1,\dots,d-1\}$.
\item $g_n(F)\to F_+$ for all $F$ transverse to $F_-$, and $g_n^{-1}(F)\to F_-$ for all $F$ transverse to $F_+$. 
\end{enumerate}
\end{proposition}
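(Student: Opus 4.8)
The plan is to prove both implications by reducing everything to the diagonal picture supplied by the singular value decomposition. Throughout, write $g_n = m_n\exp(A_n)\ell_n$ with $m_n := m_{g_n}$, $\ell_n := \ell_{g_n}$, $A_n := A_{g_n}$, so that $g_n^{-1} = \ell_n^{-1}\exp(-A_n)m_n^{-1}$; note $U(g_n) = m_n\cdot F_0$ and $U(g_n^{-1}) = \ell_n^{-1}\cdot\wt{F_0}$, where $\wt{F_0}$ is the flag opposite to $F_0$ (i.e. $\wt{F_0}^k = {\rm Span}(e_d,\dots,e_{d-k+1})$). The key elementary computation, which I would isolate as a sublemma, is the behavior of $\exp(A_n)$ acting on a fixed flag: if $\tau_n^{(k)} := \sigma_k(g_n)/\sigma_{k+1}(g_n)\to\infty$ for every $k$, then for \emph{any} flag $F$ transverse to $\wt{F_0}$ one has $\exp(A_n)\cdot F\to F_0$, and likewise $\exp(-A_n)\cdot F\to\wt{F_0}$ for any $F$ transverse to $F_0$. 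This is a direct computation: transversality of $F$ to $\wt{F_0}$ means $F^k$ is the graph of a linear map over $F_0^k$, and conjugating by the diagonal $\exp(A_n)$ scales the off-diagonal blocks by ratios of the $\sigma_k$'s, all of which tend to $0$ under the gap hypothesis; I would phrase this in terms of the Plücker/Grassmannian coordinates to keep it clean, or equivalently observe that $\exp(A_n)$ normalized lies in a compact set converging to the projection onto $F_0^k$ on each Grassmannian factor.

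For the implication $(1)\Rightarrow(2)$: assume $m_n\cdot F_0\to F_+$, $\ell_n^{-1}\cdot\wt{F_0}\to F_-$, and all gaps $\tau_n^{(k)}\to\infty$. Fix $F$ transverse to $F_-$. After passing to a subsequence we may assume $m_n\to m$ and $\ell_n\to\ell$ in the compact group $\mathsf{PO}(d)$ (this is where compactness enters, and why the final statement need only be proved up to subsequences); then $m\cdot F_0 = F_+$ and $\ell^{-1}\cdot\wt{F_0} = F_-$. Since $F$ is transverse to $F_- = \ell^{-1}\cdot\wt{F_0}$ and $\ell_n\to\ell$, for large $n$ the flag $\ell_n\cdot F$ is transverse to $\wt{F_0}$, and in fact $\ell_n\cdot F\to\ell\cdot F$, which is transverse to $\wt{F_0}$. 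Applying the sublemma to the (varying but convergent, hence eventually uniformly transverse) flags $\ell_n\cdot F$ gives $\exp(A_n)\ell_n\cdot F\to F_0$, and then $g_n\cdot F = m_n\exp(A_n)\ell_n\cdot F\to m\cdot F_0 = F_+$. Since every subsequence has a further subsequence converging to $F_+$, the whole sequence $g_n\cdot F\to F_+$. The argument for $g_n^{-1}\cdot F\to F_-$ for $F$ transverse to $F_+$ is identical with the roles of $m_n,\ell_n,A_n$ swapped appropriately. Here I should be a little careful about the sublemma applied to a sequence of flags rather than a single flag; the clean fix is to prove the sublemma in the stronger uniform form ``if $F_n\to F_\infty$ with $F_\infty$ transverse to $\wt{F_0}$, then $\exp(A_n)\cdot F_n\to F_0$,'' which is no harder.

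For the converse $(2)\Rightarrow(1)$: assume $g_n\cdot F\to F_+$ for all $F$ transverse to $F_-$ and $g_n^{-1}\cdot F\to F_-$ for all $F$ transverse to $F_+$. Passing to a subsequence, assume $m_n\to m$, $\ell_n\to\ell$, and also that each ratio $\tau_n^{(k)}$ converges in $[1,\infty]$; say the set of indices where $\tau_n^{(k)}\not\to\infty$ is some $S\subset\{1,\dots,d-1\}$, and suppose for contradiction $S\neq\emptyset$. Choosing $F$ transverse to $F_-$ appropriately (e.g. a generic flag, so that $\ell_n\cdot F$ stays transverse to $\wt{F_0}$), one computes using the diagonal picture that $\exp(A_n)\ell_n\cdot F$ does \emph{not} converge to $F_0$ in the Grassmannian factors indexed by $S$ — the bounded ratios prevent the off-diagonal blocks from vanishing — and hence $g_n\cdot F$ fails to converge to $m\cdot F_0$, or converges to the wrong limit along a subsequence as $F$ varies; quantifying this forces a contradiction with strong dynamics preservation. (Concretely: strong dynamics preservation says the limit $F_+$ is independent of $F$ among all $F$ transverse to $F_-$, whereas bounded gaps produce a nontrivial dependence on $F$.) This pins down $\tau_n^{(k)}\to\infty$ for all $k$; granted that, the forward implication already proved shows $g_n\cdot F\to m\cdot F_0$, so $F_+ = m\cdot F_0 = \lim U(g_n)$ and symmetrically $F_- = \lim U(g_n^{-1})$, along the subsequence, and then along the whole sequence by the usual subsequence argument. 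I expect the main obstacle to be exactly this converse direction — specifically, packaging ``bounded singular value gap $\Rightarrow$ the limit of $g_n\cdot F$ genuinely depends on $F$'' into a clean contradiction without drowning in indices; the cleanest route is probably to exhibit two specific flags $F, F'$ transverse to $F_-$ whose images under $g_n$ have distinct limits, using a $2\times 2$ block where the gap stays bounded.
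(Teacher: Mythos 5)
Your proposal is correct and follows the standard route: the paper itself gives no proof of Proposition \ref{prop: singular values} but defers to \cite[Appendix A]{CZZ23}, where the same Cartan/singular-value-decomposition strategy is used (compactness of $\mathsf{PO}(d)$, subsequence extraction, the graph-coordinate computation for $\exp(A_{g_n})$ on flags transverse to $\wt{F_0}$, and for the converse the observation that a bounded gap $\sigma_k/\sigma_{k+1}$ makes the limit of $g_n\cdot F$ depend nontrivially on the $(1,k)$ graph entry of $\ell\cdot F$, contradicting (2)). The only point worth making explicit is that the identity $U(g_n^{-1})=\ell_n^{-1}\cdot\wt{F_0}$ is canonical only once the singular values of $g_n$ are pairwise distinct, which the gap hypothesis (in one direction) or the gap conclusion (in the other) eventually guarantees.
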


Next, recall that $g\in\PGL_d(\Rb)$ is \emph{weakly unipotent} if its multiplicative Jordan-Chevalley decomposition has elliptic semisimple part and non-trivial unipotent part. We say that a representation $\rho:\Gamma\to\PGL_d(\Rb)$ is \emph{type preserving} if it sends parabolic elements in $\Gamma$ to weakly unipotent elements in $\PGL_d(\Rb)$. If $\Gamma\subset\SL_2(\Rb)$ is geometrically finite, then given a type preserving representation $\sigma:\Gamma\to\PGL_d(\Rb)$, one can define
\[\Hom_{\rm tp}(\sigma)\subset \Hom(\Gamma,\PGL_d(\Rb))\]
to be the set of representations $\rho:\Gamma\to\PGL_d(\Rb)$ such that $\rho(\alpha)$ is conjugate to $\sigma(\alpha)$ for all parabolic $\alpha\in\Gamma$. The following are results of Canary, Zhang and Zimmer \cite[Theorem 4.1(2) and Theorem 8.1]{CZZ21}

\begin{theorem}[Canary-Zhang-Zimmer]\label{thm: open}
Suppose that $\Gamma\subset\SL_2(\Rb)$ is geometrically finite. If $\rho:\Gamma\to\PGL_d(\Rb)$ is $P_\theta$-transverse for some symmetric $\theta\subset\Delta$, then:
\begin{enumerate}
\item $\rho$ is type-preserving.
\item The set of $P_\theta$-transverse representations in $\Hom_{\rm tp}(\rho)$ is open.
\end{enumerate}
\end{theorem}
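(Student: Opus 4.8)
The plan is to handle the two assertions separately; assertion~(2) is the substantial one and ultimately reduces to the characterizations of cusped $P_\theta$-Anosov representations established in \cite{CZZ21}.

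For assertion~(1), let $\alpha\in\Gamma$ be parabolic, let $p\in\Lambda(\Gamma)=\partial\Hb^2$ be its unique fixed point, and observe that $\alpha^{\pm n}\cdot b_0\to p$ as $n\to\infty$. Applying the strongly dynamics preserving property of the limit map $\xi$ to the sequences $\gamma_n=\alpha^n$ and $\gamma_n=\alpha^{-n}$ (for each of which one has $x=y=p$) shows that $\rho(\alpha)^{n}\cdot F\to\xi(p)$ and $\rho(\alpha)^{-n}\cdot F\to\xi(p)$ for every $F\in\Fc_\theta(\Rb^d)$ transverse to $\xi(p)$; equivalently, by the $P_\theta$-analogue of Proposition~\ref{prop: singular values}, the attracting and the repelling $\theta$-flags of the divergent sequence $\{\rho(\alpha)^n\}$ both coincide with $\xi(p)$. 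Writing $g:=\rho(\alpha)$ and decomposing $\Rb^d=V_1\oplus\dots\oplus V_r$ into the real generalized eigenspaces of $g$ grouped by decreasing modulus of eigenvalue, one checks that for generic $F$ and $k=\min\theta$ the subspace $\lim_n g^{n}\cdot F^k$ is caught between the appropriate partial sums $V_1\oplus\dots\oplus V_{i-1}$ and $V_1\oplus\dots\oplus V_i$ from the top of the spectrum, while $\lim_n g^{-n}\cdot F^k$ is caught between the corresponding partial sums from the bottom of the spectrum; if $r\geq2$ these two $g$-invariant subspaces cannot be equal, a contradiction. Hence $r=1$, so after rescaling in $\PGL_d(\Rb)$ every eigenvalue of $g$ has modulus~$1$ and the semisimple part of $g$ is elliptic. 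Finally $g\neq\id$ (otherwise $F=\xi(p)$ for all transverse $F$), and if $g$ were itself semisimple then $\overline{\langle g\rangle}$ would be a compact torus, so choosing $n_k\to\infty$ with $g^{n_k}\to\id$ would again force $F=\xi(p)$; thus the unipotent part of $g$ is nontrivial, $g$ is weakly unipotent, and $\rho$ is type-preserving.

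For assertion~(2), by Remark~\ref{rem: extend} it is equivalent to show that being cusped $P_\theta$-Anosov is an open condition inside $\Hom_{\rm tp}(\rho)$. The plan is to use the characterization of such representations from \cite{CZZ21}: a representation of a geometrically finite $\Gamma$ is cusped $P_\theta$-Anosov precisely when it is type-preserving, admits a continuous equivariant transverse limit map, and satisfies a uniform $P_\theta$-gap (contraction) estimate of the form $\sigma_k(\rho(\gamma))/\sigma_{k+1}(\rho(\gamma))\geq C\,e^{\mu\,d(b_0,\gamma\cdot b_0)}$ for $k\in\theta$ along group elements $\gamma$ whose geodesic representatives stay in the thick part of $\Hb^2/\Gamma$ (the behaviour near the cusps being governed automatically by the weakly unipotent peripheral images). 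The type-preserving condition is constant on $\Hom_{\rm tp}(\rho)$ by definition; the uniform gap estimate, restricted to the relatively cocompact thick part, is a standard Anosov-type open condition, since it only has to be verified on finitely many local pieces up to a uniform scale and this verification is stable under small perturbations of $\rho$; and the limit map persists because the relevant limit flags depend continuously on $\rho$ wherever the estimate holds, while the fixed peripheral data together with the local-to-global principle for relatively hyperbolic groups allows one to patch these local limit flags into a global continuous transverse limit map for nearby representations (alternatively, once a nearby $\rho'$ is known to have a limit map, transversality and positivity of a single triple transfer by a Proposition~\ref{thm 0}-type argument). Combining these three points, the cusped $P_\theta$-Anosov (equivalently, $P_\theta$-transverse) condition is open in $\Hom_{\rm tp}(\rho)$.

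The routine part of assertion~(1) is the bookkeeping in the generalized-eigenspace argument when $\theta$ is a proper symmetric subset of $\Delta$, together with the separate treatment of the compact-semisimple case; these are not serious obstacles. The genuine difficulty lies in assertion~(2): one must show that a uniform gap estimate valid only along the thick part, together with the a~priori-fixed peripheral behaviour encoded by membership in $\Hom_{\rm tp}(\rho)$, glues to a genuinely global Anosov property and to a global continuous limit map. This local-to-global step is precisely the content of the cusped-Anosov machinery of \cite{CZZ21}, and carrying it out in full would amount to redeveloping that theory.
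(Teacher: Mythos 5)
The paper does not prove Theorem \ref{thm: open} at all: it is imported verbatim as \cite[Theorem 4.1(2) and Theorem 8.1]{CZZ21}, so there is no internal argument to compare yours against. Measured against that, your treatment of assertion (1) is a genuine, essentially self-contained reconstruction of the standard argument: applying the strongly dynamics preserving property to $\gamma_n=\alpha^{\pm n}$ with $x=y=p$ forces the attracting and repelling $\theta$-flags of $\{\rho(\alpha)^n\}$ to coincide, the sandwich of the limiting $k$-planes between partial sums of generalized eigenspaces (ordered by modulus from the top for $g^n$ and from the bottom for $g^{-n}$) rules out more than one modulus class, and the compact-closure argument rules out trivial unipotent part. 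This is correct in outline; the bookkeeping you defer (e.g.\ non-convergence of $g^nF^k$ when a top modulus class carries a nontrivial rotation, which is excluded precisely because the limit is assumed to exist) does go through.

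For assertion (2), however, what you have written is not a proof but a description of what \cite[Theorem 8.1]{CZZ21} asserts, together with an acknowledgement that establishing it ``would amount to redeveloping that theory.'' The two steps you wave at are exactly the substance of that theorem: (i) that a uniform singular-value gap verified only along geodesics in the thick part, combined with fixed weakly unipotent peripheral conjugacy classes (this is where membership in $\Hom_{\rm tp}(\rho)$ is essential --- without it openness is false), upgrades to the global transversality/contraction package; and (ii) that the limit maps of nearby representations exist, are continuous and transverse, and vary continuously. Neither ``standard Anosov-type open condition'' nor ``local-to-global principle'' is a substitute for this in the cusped setting, since the group does not act cocompactly and the estimates degenerate in the cusps. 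So, strictly as a blind proof, assertion (2) has a genuine gap at precisely this point; the correct resolution --- and the one the paper takes --- is to cite \cite{CZZ21} for the whole statement rather than to sketch its proof.
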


Finally, let $k\geq 1$ be an integer. Recall from the proof of Lemma \ref{lem: Pascal} the representation 
\[\iota_k:\GL_2(\Rb)\to\GL(\mathrm{Sym}^{k-1}(\Rb^2))\cong\GL_k(\Rb).\] 
One can verify that $\iota_k$ restricts to a representation 
\[\iota_k:\SL_2(\Rb)\to\SL_k(\Rb).\]
Now, given any $j\in\{1,\dots,\frac{d-1}{2}\}$, let 
\[\tau_{d,j}:=\iota_{d-j}\oplus\iota_j:\SL_2(\Rb)\to\SL_{d-j}(\Rb)\oplus\SL_{j}(\Rb)\subset\SL_d(\Rb).\]
Let $(e_1,\dots,e_d)$ be the standard basis of $\Rb^d$, let 
\[(f_1,\dots,f_{d-j}):=(e_1,e_2,\dots,e_{d-j})\quad\text{and}\quad(f_1',\dots,f_j'):=(e_{d-j+1},\dots,e_d),\] 
and let {$k:=\frac{d-2j+1}{2}$}. Then let $B'\subset\SL_d(\Rb)$ be the upper triangular group with respect to the basis
\[\Bc:=(f_1,f_2,\dots,f_k,f_1',f_{k+1},f_2',f_{k+2},\dots,f_j', f_{k+j}, f_{k+j+1}, f_{k+j+2}, \dots, f_{d-j})\]
of $\Rb^d$. Observe that $\tau_{d,j}^{-1}(B')$ is the upper triangular subgroup of $\SL_2(\Rb)$ with respect to the standard basis $(e_1,e_2)$ of $\Rb^2$, so we may define the $\tau_{d,j}$-equivariant embedding
\[\xi_{d,j}:\mathbb{RP}^1\cong\SL_2(\Rb)/\tau_{d,j}^{-1}(B')\to\SL_d(\Rb)/B'\cong\Fc(\Rb^d).\] 
Let $F_+$ and $F_-$ be the flags in $\Fc(\Rb^d)$ with the defining property that for all $k\in\{1,\dots,d-1\}$, $F_+^k$ is spanned by the first $k$ vectors of the basis $\Bc$ and $F_-^k$ is spanned by the last $k$ vectors of $\Bc$. Observe that $\xi_{d,j}([e_1])=F_+$ and $\xi_{d,j}([e_2])=F_-$.

\begin{proposition}\label{prop: Barbot} 
For every $j\in\{1,\dots,\frac{d-1}{2}\}$, the following hold:
\begin{enumerate}
\item The map $\xi_{d,j}$ is transverse.
\item If $\{g_n\}$ is a sequence in $\SL_2(\Rb)$ and $x,y\in\mathbb{RP}^1$ such that $g_n\cdot b_0\to x$ and $g_n^{-1}\cdot b_0\to y$ for some/all $b_0\in\Hb^2$, then $\tau_{d,j}(g_n)\cdot F\to\xi_{d,j}(x)$ for all $F$ transverse to $\xi_{d,j}(y)$, and $\tau_{d,j}(g_n^{-1})\cdot F\to\xi_{d,j}(y)$ for all $F$ transverse to $\xi_{d,j}(x)$.
\end{enumerate}
In particular, if $\Gamma\subset\SL_2(\Rb)$ is a non-elementary, discrete subgroup and $\pi:\SL_d(\Rb)\to\PSL_d(\Rb)\subset\PGL_d(\Rb)$ is the obvious quotient map, then 
\[\rho:=\pi\circ\tau_{d,j}|_{\Gamma}:\Gamma\to\PGL_d(\Rb)\] 
is Borel transverse with limit map $\xi_{d,j}|_{\Lambda(\Gamma)}$.
\end{proposition}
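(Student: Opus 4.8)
The plan is to check the two asserted properties of $\xi_{d,j}$ and then read off the ``in particular'' clause from the definition of a Borel transverse representation. Property (1) reduces to a single pair of flags: since $\xi_{d,j}$ is $\tau_{d,j}$-equivariant and $\SL_2(\Rb)$ acts transitively on ordered pairs of distinct points of $\mathbb{RP}^1$, it suffices to verify that $\xi_{d,j}([e_1])=F_+$ and $\xi_{d,j}([e_2])=F_-$ are transverse, which is immediate because $F_+^\ell$ is spanned by the first $\ell$ vectors of $\Bc$ while $F_-^{d-\ell}$ is spanned by the last $d-\ell$ vectors of $\Bc$, so that $F_+^\ell\oplus F_-^{d-\ell}=\Rb^d$ for every $\ell$.

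The core of the proof is a linear algebra statement about the diagonal torus of $\SL_2(\Rb)$ that I would isolate first: if $a_n=\mathrm{diag}(\mu_n,\mu_n^{-1})$ with $\mu_n\to\infty$, then $\tau_{d,j}(a_n)$ has \emph{uniform} North--South dynamics toward $F_+$, in the sense that $\tau_{d,j}(a_n)\cdot G_n\to F_+$ whenever $G_n\to G$ in $\Fc(\Rb^d)$ with $G$ transverse to $F_-$ (and symmetrically for $\tau_{d,j}(a_n^{-1})$ toward $F_-$). To prove this I would note that $\Bc$ is an orthonormal basis (it is a permutation of the standard basis) and that $\tau_{d,j}(a_n)=\iota_{d-j}(a_n)\oplus\iota_j(a_n)$ is \emph{diagonal} in $\Bc$, since each $f_i$ and $f_i'$ is a weight vector for the diagonal torus. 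The crucial combinatorial fact — and the whole reason for the interleaved ordering defining $\Bc$ — is that the weight exponents, read off $\Bc$ in order, form a \emph{strictly decreasing} sequence $p_1>p_2>\dots>p_d$: using that $\iota_m(\mathrm{diag}(\mu,\mu^{-1}))$ scales $e_1^{m-i}e_2^{i-1}$ by $\mu^{m-2i+1}$ and that $k=\frac{d-2j+1}{2}$, one computes this list to be $d-j-1, d-j-3, \dots, j+2, j, j-1, j-2, \dots, 1-j, -j, -j-2, \dots, -(d-j-1)$. Writing $v_i$ for the $i$-th vector of $\Bc$, so that $\tau_{d,j}(a_n)v_i=\mu_n^{p_i}v_i$, the uniform North--South statement is then a routine estimate: for each $\ell$, transversality lets one present $G_n^\ell$ (for $n$ large) as the graph of a linear map $F_+^\ell\to F_-^{d-\ell}=\mathrm{Span}(v_{\ell+1},\dots,v_d)$ whose coefficients converge, say $G_n^\ell=\mathrm{Span}(v_m+w_{n,m}:1\le m\le\ell)$ with $w_{n,m}\in\mathrm{Span}(v_{\ell+1},\dots,v_d)$ and $w_{n,m}\to w_m$; applying $\tau_{d,j}(a_n)$ and rescaling the $m$-th generator by $\mu_n^{-p_m}$ turns it into $v_m+\sum_{i>\ell}(w_{n,m})_i\,\mu_n^{p_i-p_m}v_i$, and since $i>\ell\ge m$ forces $p_i-p_m<0$, the correction terms vanish in the limit, so $G_n^\ell\to\mathrm{Span}(v_1,\dots,v_\ell)=F_+^\ell$. (The pointwise version is also an instance of Proposition~\ref{prop: singular values}, but the explicit estimate supplies the local uniformity needed below.)

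Granting the core statement, property (2) follows formally from the Cartan decomposition of $\SL_2(\Rb)$ and compactness of $\mathsf{SO}(2)$. Given $g_n$ with $g_n\cdot b_0\to x$ and $g_n^{-1}\cdot b_0\to y$, the sequence $\{g_n\}$ leaves every compact subset of $\SL_2(\Rb)$, so we may write $g_n=k_na_nh_n$ with $k_n,h_n\in\mathsf{SO}(2)$, $a_n=\mathrm{diag}(\mu_n,\mu_n^{-1})$, and $\mu_n\to\infty$. After passing to a subsequence with $k_n\to k_\infty$ and $h_n\to h_\infty$, the standard North--South dynamics of $\SL_2(\Rb)$ on $\overline{\Hb^2}$ forces $x=k_\infty\cdot[e_1]$ and $y=h_\infty^{-1}\cdot[e_2]$, hence, by equivariance, $\xi_{d,j}(x)=\tau_{d,j}(k_\infty)\cdot F_+$ and $\xi_{d,j}(y)=\tau_{d,j}(h_\infty^{-1})\cdot F_-$. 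For any $F$ transverse to $\xi_{d,j}(y)$, one has $\tau_{d,j}(h_n)\cdot F\to\tau_{d,j}(h_\infty)\cdot F=:G$ with $G$ transverse to $F_-$, so the core statement gives $\tau_{d,j}(a_n)\tau_{d,j}(h_n)\cdot F\to F_+$, and therefore $\tau_{d,j}(g_n)\cdot F=\tau_{d,j}(k_n)\tau_{d,j}(a_n)\tau_{d,j}(h_n)\cdot F\to\tau_{d,j}(k_\infty)\cdot F_+=\xi_{d,j}(x)$; since this limit is independent of the chosen subsequence, the whole sequence $\tau_{d,j}(g_n)\cdot F$ converges to $\xi_{d,j}(x)$, and running the same argument for $\{g_n^{-1}\}$ gives the companion statement with $x$ and $y$ exchanged. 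Finally, $\xi:=\xi_{d,j}|_{\Lambda(\Gamma)}$ is continuous, $\rho$-equivariant (since $\rho=\pi\circ\tau_{d,j}|_\Gamma$ acts on $\Fc(\Rb^d)$ through $\tau_{d,j}$), transverse by (1), and strongly dynamics preserving by (2) — noting that if $\gamma_n\in\Gamma$ satisfies $\gamma_n\cdot b_0\to x$ and $\gamma_n^{-1}\cdot b_0\to y$ then automatically $x,y\in\Lambda(\Gamma)$ — so $\rho$ is $P_\Delta$-transverse, i.e.\ Borel transverse, with limit map $\xi$.

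I expect the main obstacle to be the combinatorial step in the second paragraph: checking that the interleaved ordering of $\Bc$ really makes the weights of $\tau_{d,j}(a_n)$ strictly decreasing. This is a finite but somewhat finicky bookkeeping, and it is precisely what the peculiar definition of $\Bc$ is designed to achieve; the uniform North--South estimate it powers is routine, and the remaining steps are just equivariance, compactness of $\mathsf{SO}(2)$, and the $\SL_2(\Rb)$ Cartan decomposition.
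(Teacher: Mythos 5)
Your proof is correct and follows essentially the same route as the paper: part (1) is reduced to the transversality of $F_+$ and $F_-$ via equivariance, and part (2) rests on the Cartan (= singular value) decomposition of $g_n$ together with the observation that the weights of the diagonal torus, read in the order of $\Bc$, form a strictly decreasing sequence --- which is exactly the paper's computation that all consecutive singular value gaps $\sigma_i(\tau_{d,j}(g_n))/\sigma_{i+1}(\tau_{d,j}(g_n))$ diverge. The only cosmetic difference is that you reprove the resulting north--south dynamics by hand (the graph-of-a-linear-map estimate, in a uniform form for the diagonal part, with the rotations handled by passing to subsequences), whereas the paper verifies the hypotheses of Proposition~\ref{prop: singular values} for $\tau_{d,j}(g_n)$ itself and cites that proposition.
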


\begin{proof}
To simplify notation, we will denote $\xi:=\xi_{d,j}$ and $\tau:=\tau_{d,j}$. 

(1) Pick any pair of distinct points $a,b\in\mathbb{RP}^1$. Then there is some $g\in\SL_2(\Rb)$ such that $(g\cdot a,g\cdot b)=([e_1],[e_2])$. By the $\tau$-equivariance of $\xi$, it follows that
\[(\xi(a),\xi(b))=\left(\tau(g^{-1})\cdot \xi([e_1]),\tau(g^{-1})\cdot \xi([e_2])\right),\]
so it suffices to verify that $\xi([e_1])$ and $\xi([e_2])$ are transverse. This holds because $\xi([e_1])=F_+$ and $\xi([e_2])=F_-$.

(2) Note that $g_n\cdot z\to x$ for all $z\in\mathbb{RP}^1-\{y\}$ and  $g_n^{-1}\cdot z\to y$ for all $z\in\mathbb{RP}^1-\{x\}$. Proposition~\ref{prop: singular values} then implies that 
\[m_n\cdot [e_1]=U(g_n)\to x,\quad\ell_n^{-1}\cdot[e_2]=U(g_n^{-1})\to y \quad \text{ and } \quad \frac{\sigma_1(g_n)}{\sigma_2(g_n)}\to\infty,\]
where $g_n=m_n\exp(A_{g_n})\ell_n$ is a singular value decomposition of $g_n$.  
In particular, any subsequential limit $m$ of $\{m_n\}$ and $\ell$ of $\{\ell_n\}$ satisfy
\[m\cdot[e_1]=x \quad \textrm{and} \quad  \ell^{-1}\cdot[e_2]=y.\]

Note that 
\[\tau(g_n)=\tau(m_n)\tau(\exp(A_{g_n}))\tau(\ell_n)\] 
is a singular value decomposition of $\tau(g_n)$. It then follows that
\[U(\tau(g_n))=\tau(m_n)\cdot F_+\to \tau(m)\cdot F_+=\tau(m)\cdot \xi([e_1])=\xi(x),\]
where $m$ is some/any subsequential limit of $\{m_n\}$. Similarly,
\[U(\tau(g_n)^{-1})\to\xi(y).\]
This also implies that 
$$\tau(\exp(A_{g_n}))=\exp(A_{\tau(g_n)}) \quad \textrm{and} \quad  
\frac{\sigma_i(\tau(g_n))}{\sigma_{i+1}(\tau(g_n))} \to \infty,$$
because
\[\frac{
\sigma_i(\tau(g_n))}{\sigma_{i+1}(\tau(g_n))}=\left\{
\begin{array}{rl}
\frac{\sigma_1(g_n)}{\sigma_2(g_n)}&\text{ if } 1\leq i\leq k-1\text{ or } d-k+1\leq i\leq d-1,\vspace{5pt}\\
\sqrt{\frac{\sigma_1(g_n)}{\sigma_2(g_n)}}&\text{ if } k\leq i\leq d-k.
\end{array}\right.\]

Thus, by Proposition \ref{prop: singular values}, $\tau(g_n)\cdot F\to\xi(x)$ for all $F$ transverse to $\xi(y)$ and $\tau(g_n)\cdot F\to\xi(y)$ for all $F$ transverse to $\xi(x)$. 

Therefore, $\rho$ is Borel transverse with limit map $\xi|_{\Lambda(\Gamma)}$. Indeed, $\xi$ is continuous and $\tau$-equivariant, $\xi$ is transverse by (1), and $\xi$ is strongly dynamics preserving by (2).
\end{proof}

We may now define the Barbot examples. Given $j\in\{1,\dots,\frac{d-1}{2}\}$, a representation $\rho:\Gamma\to\PGL_d(\Rb)$ is a \emph{$(\Gamma,d,j)$-Barbot example} if there is a continuous path $f:[0,1]\to\Hom_{\rm tp}(\pi\circ\tau_{d,j}|_\Gamma)$ such that $f(0)=\rho$, $f(1)=\pi\circ\tau_{d,j}|_{\Gamma}$, and $f(t)$ is Borel transverse for all $t\in[0,1]$. By Theorem~\ref{thm: open} and Proposition \ref{prop: Barbot}, the $(\Gamma, d,j)$-Barbot examples form a connected, non-empty, open set in $\Hom_{\rm tp}(\pi\circ\tau_{d,j}|_\Gamma)$.

\begin{remark}
We may define the $(\Gamma,d,j)$-Barbot examples for discrete subgroups $\Gamma\subset\SL^\pm_2(\Rb)$ as well: these are representations $\rho:\Gamma\to\PGL_d(\Rb)$ whose restriction to $\Gamma\cap\SL_2(\Rb)$ is a $(\Gamma,d,j)$-Barbot example as described above. Since $\Gamma\cap\SL_2(\Rb)\subset\Gamma$ is a finite-index subgroup, these representations are also Borel-transverse.
\end{remark}

\bibliographystyle{alpha}
\bibliography{bibliography}

\end{document}